\pgfplotsset{compat=1.16}   
\theoremstyle{plain}
    \newtheorem{theorem}{Theorem}[section]
    \newaliascnt{corollary}{theorem}
    \newtheorem{thmmanual}{Theorem}[section]
    \newtheorem{corollary}[corollary]{Corollary}
    \newaliascnt{lemma}{theorem}
    \newtheorem{lemma}[lemma]{Lemma}
    \newaliascnt{proposition}{theorem}
\theoremstyle{definition}
    \newaliascnt{definition}{theorem}
    \newtheorem{definition}[definition]{Definition}
\theoremstyle{remark}
    \newaliascnt{remark}{theorem}
    \newtheorem{remark}[remark]{Remark}
 \newaliascnt{example}{theorem}
\newcommand{\cP}{\mathcal{P}}
\newcommand{\cH}{\mathcal{H}}
\newcommand{\cV}{\mathcal{V}}
\newcommand{\cD}{\mathcal{D}}
\newcommand{\cB}{\mathcal{B}}
\newcommand{\cM}{\mathcal{M}}
\newcommand{\tq}{\tilde{\mathbf{q}}}
\newcommand{\tpsi}{\Tilde{\bm{\psi}}}
\newcommand{\td}{\tilde{d}}
\newcommand{\tY}{\tilde{Y}}
\newcommand{\tA}{\tilde{A}}
\newcommand{\tW}{\tilde{W}}
\newcommand{\tB}{\tilde{B}}
\newcommand{\q}{\mathbf{q}}
\newcommand{\x}{\mathbf{x}}
\newcommand{\y}{\mathbf{y}}
\newcommand{\bfphi}{\bm{\phi}}
\newcommand{\bfpsi}{\bm{\psi}}
\newcommand{\bfxi}{\bm{\xi}}
\newcommand{\R}{\mathbb{R}}
\newcommand{\N}{\mathbb{N}}
\newcommand{\E}{\mathbb{E}}
\newcommand{\bP}{\mathbb{P}}
\newcommand{\Hmo}{\mathbf{H}^{-1}} 
\newcommand{\bLtwo}{\mathbf{L}^2}
\newcommand{\bH}{\mathbf{H}}
\newcommand{\oPi}{\Pi^\perp}
\newcommand{\Wtd}[2]{W_{\tilde{d}}\left( #1, #2 \right)}
\newcommand{\fa}{\quad \text{for all }\,}
\newcommand{\quand}{\quad \text{and} \quad }
\DeclareMathOperator{\Span}{span}
\DeclareMathOperator{\range}{range}
\DeclareMathOperator{\Law}{Law}
\newcommand{\vertiii}[1]{{\left\vert\kern-0.25ex\left\vert\kern-0.25ex\left\vert #1 
    \right\vert\kern-0.25ex\right\vert\kern-0.25ex\right\vert}} 
\newcommand{\footremember}[2]{%
    \footnote{#2}
    \newcounter{#1}
    \setcounter{#1}{\value{footnote}}%
}
\newcommand{\footrecall}[1]{%
    \footnotemark[\value{#1}]%
} 
\title{Exponential ergodicity for a stochastic two-layer quasi--geostrophic model}
\author{%
  Giulia Carigi\footremember{math}{Department of Mathematics and Statistics, University of Reading, Reading, UK}\footremember{mpe}{Centre for the Mathematics of Planet Earth, University of Reading, Reading, UK}
  \and Jochen Br\"{o}cker\footrecall{math} \footrecall{mpe} \footnote{Department of Meteorology, University of Reading, Reading, UK} %
  \and Tobias Kuna\footrecall{math} \footrecall{mpe} \footnote{Dipartimento di Ingegneria e Scienze dell’Informazione e Matematica, Universit\`{a} degli Studi dell’Aquila, 67100 L’Aquila, Italy}%
  }
\begin{document}
\maketitle
\begin{abstract}
    Ergodic properties of a stochastic medium complexity model for atmosphere and ocean dynamics are analysed. More specifically, a two--layer quasi--geostrophic model for geophysical flows is studied, with the upper layer being perturbed by additive noise. This model is popular in the geosciences, for instance to study the effects of a stochastic wind forcing on the ocean. A rigorous mathematical analysis however meets with the challenge that in the model under study, the noise configuration is spatially degenerate as the stochastic forcing acts only on the top layer. 
    Exponential convergence of solutions laws to the invariant measure is established, implying a spectral gap of the associated Markov semigroup on a space of H\"older continuous functions.
    The approach provides a general framework for generalised coupling techniques suitable for applications to dissipative SPDE's.
    In case of the two--layer quasi--geostrophic model, the results require the second layer to obey a certain passivity condition.
\end{abstract}

\paragraph{Keywords:}generalized coupling; SPDEs; Stochastic geophysical flow models.
\paragraph{AMS Subject Classification:} \textit{Primary:} 37L40, 76U60 \textit{Secondary:} 60H15, 86A08 
%
\section{Introduction}
In this work we study the long time average behaviour of a stochastic version of an important model for large-scale atmosphere and ocean dynamics, the two--layer quasi--geostrophic (2LQG) model with a forcing on the top layer, to account, for example, for  the wind forcing on the upper ocean, composed of a deterministic and a stochastic part, which is white in time and coloured in space.
We will provide conditions for the model to be exponentially ergodic, which means that the transition probabilities converge to the unique stationary probability distribution with an exponential rate, and that temporal averages of an observable converge to averages of the observable with respect to that stationary probability distribution.
Indeed, showing exponential ergodicity for the stochastic 2LQG model and, in particular ergodicity, justifies that the long term average behaviour of the ocean dynamics at the mid--latitudes under stochastic wind stress can be studied using the unique probability distribution invariant under the dynamics. 
In general ergodicity is often a tacit assumption underlying applications, as it allows to draw conclusions on the statistics of the system from its time series.

In mathematical terms, for a system modeled by a stochastic partial differential equation (SPDE), as in this paper, exponential ergodicity means the following:
Consider the Markov semigroup and its transition probabilities $\{P_t, t\geq 0\}$ associated to it. When $\mu$ is an invariant measure with respect to the transition probabilities, we speak of exponential ergodicity if the transition probabilities $\{P_t, t\geq 0\}$ converge exponentially in time to an invariant measure $\mu$, independent of the initial data. This implies in particular that there exists exactly one invariant measure which has thus to be ergodic in the usual sense.

The quasi--geostrophic (QG) model, already present in the literature from the late 1930s, was systematically derived by Charney in 1948 in \cite{Charney48}. Used in early operational numerical weather forecasts, the QG model is still used extensively in research as it is strikes a balance between simplicity of formulation versus spectrum of the phenomena it can reproduce.
The QG model is an approximation of three dimensional Navier--Stokes equations on a rotating coordinate frame which best models the large-scale features (e.g.~1000 km for the atmosphere and 100 km for the ocean) of the atmosphere or the ocean at mid latitudes. 
%
Quasi--geostrophic models with several layers in particular are able to represent density stratification and provide insights into, for instance, atmosphere--ocean coupling and baroclinic instabilities. This type of instabilities is extremely common in both the atmosphere and ocean and is at the origin of large scale weather phenomena, for instance mid--latitude cyclones. The two--layer quasi--geostrophic (2LQG) model is one of the simplest models where the baroclinic instabilities arise.

%
%
The effect of a stochastic wind forcing on QG models has been a topic of research in meteorology and oceanography for at least thirty years, both in the single layer case e.g.~\cite{Griffa91, MOORE1999335, Treguier87} and the multi--layer case, see e.g.~\cite{berloff_2005, Kettyah2006,DelSole04,o2012emergence}.
In the mathematical literature the stochastic single--layer QG model, with either additive or multiplicative noise, has definitely received more attention (see e.g.~\cite{BRANNAN1998, Duan_ergodicityof, duan2004exponential, Desheng10}) than its multi--layer version. In fact, we can expect results achieved for the single layer to extend to the multi--layer case in situations where the random terms appear in all layers.
Less studied, though, is the action of a stochastic forcing acting only on one of the layers and  its consequent effects on the other layers and the whole dynamics. 

To the best of the authors' knowledge, the only reference for a mathematical investigation of a two--layer quasi--geostrophic model with a forcing on the top layer, prior to this work, is \cite{Chueshov01_Proba}. There the authors studied the long time dynamics of the model using the method of determining functionals for random dynamical systems. This method gives a way to parametrise the system's global attractor by means of a finite number of functionals. 
Furthermore, under some conditions on the parameters of the system, it is shown that functionals depending only on the top layer suffice to describe the attractor. However, this approach does not give information on the statistics of the model which will be the focus here.
With regards to exponential ergodicity, Harris' theorem provides conditions under which exponential convergence of transition probabilities holds in the total variation norm.
A major obstacle to applying this theorem in the context of SPDEs is that the transition probabilities may very easily be singular for different initial conditions.
In \cite[Section~4]{HMattSch11} a new framework is introduced to retrieve a version of Harris' theorem in the infinite dimensional context, which gives exponential rate of convergence in a Wasserstein semimetric, rather than in total variation.
The uniqueness and hence ergodicity of the invariant measure follow as well, and in most cases of interest also its existence.
However the assumptions required to apply this generalised Harris theorem may be challenging to show in practice. Recently \cite{butkovsky2020} provided a set of conditions which gives exponential and sub-exponential rate of convergence to the invariant measure in the Wasserstein semimetric, improving \cite{butkovsky2014} and \cite{KulikSch18}. 
%
The approach in \cite{butkovsky2020} relies on the \emph{generalised coupling} method.
The main idea of this method is to add a control to the stochastic forcing to estimate the distance between solutions with different initial data, or more precisely estimate the Wasserstein semimetric between their laws. 
%
%
This approach was introduced in the early 2000's (see e.g.~\cite{Matt2002, Hairer2001ExponentialMP, HMattSch11}) as \emph{asymptotic} coupling method, but in this case the added control ensures that the solutions with different initial data synchronise, namely have the same law asymptotically in time, at least with positive probability. The term \emph{generalised coupling} was introduced in \cite{KulikSch18}.


The results in \cite{butkovsky2020} are fairly general and apply to a wide variety of infinite dimensional systems. 
In the present paper, we present a methodology inspired by the results in \cite{butkovsky2020} but with a view on applications to stochastic dissipative SPDE's that describe atmosphere and ocean dynamics.
%
%
For such SPDE's we provide conditions sufficient for the results of \cite{butkovsky2020} (in particular the generalised Harris' theorem) to apply.
Our conditions are straightforward to check and easy to interpret mathematically and physically.
For the convenience of the reader, we provide proofs for those parts of \cite{butkovsky2020} where our line of reasoning deviates from~\cite{butkovsky2020}.

%

We will apply this technique to the stochastic two--layer quasi--geostrophic model to show exponential convergence of transition probabilities. Since the noise is acting only on one of the layers, the control will have to appear only in that layer but still be able to stabilise the lower layer. 
We will be able to find such a control by also imposing a condition involving the bottom friction to ensure the result. We may think of the imposed parameter condition as requiring the bottom layer (the one without noise) to be sufficiently dissipative so as to be determined by the top one. 
This provides information on the stability of the system for large times, in the sense that not only is there a unique invariant measure but the system approaches it exponentially fast in time, provided the bottom friction is large enough.

The results presented in this work are applicable (with appropriate modifications) to models with more than two layers. The condition on the passivity of the bottom layer would have to apply to the additional layers without stochastic forcing. 
Although the passivity condition might still be considered realistic in two layers with the second accounting for the lower ocean due to the friction with bottom of the ocean for example, instead for multiple layers a strong friction on all layers is harder to justify from a physical point of view, especially since the density difference between the layers (which would basically determine the magnitude of this friction) is not so sharp. How to remove this restriction will be subject to future research.

An important immediate consequence of the exponential convergence of the transition probabilities is the \emph{spectral gap} for the Markov semigroup on a suitable space of functions. In our case we clarify that these functions are locally H\"older continuous. 
The spectral gap is a crucial ingredient to prove linear response, which investigates how the invariant measure depends on the parameters of the system (see e.g.~\cite{HMajda10}). 
Linear response is a rigorous mathematical framework in which climate change triggered by changes in external parameters or forcings can be investigated (see e.g.~\cite{AbramovMadja2012,GhilLucarini20}). Studying linear response for the 2LQG model was one of the motivations behind establishing the spectral gap, and we will consider linear response in a forthcoming paper~\cite{response}.

\paragraph{Overview over the results} 
The stochastic two--layer quasi--geostrophic model, to be presented in \autoref{sec:themodel} in more detail, describes two layers of fluid on top of one another with certain heights and densities.
Both Coriolis effects and eddy viscosity is taken into account.
Forcing acts only on the top layer and has a non-trivial stochastic part which accounts for example for the effect of the wind on the upper ocean (with the second layer then representing the deeper ocean). 
To describe the model mathematically, let $\cD$ be a squared domain $\cD = [0,L]\times [0,L]\subset \R^2$ (e.g.~L is of the order $10^6$m for the atmosphere and $10^5$m for the ocean), and consider the equations 
\begin{align}
\label{eq:QG_stochasticIntro}
\begin{split}
    &d q_1 + J(\psi_1, q_1 + \beta y ) \, dt = \left(\nu\Delta^2\psi_1 \,+ f\right) dt + d W\\
    & \partial_t q_2 + J(\psi_2, q_2 + \beta y ) = \nu\Delta^2\psi_2 - r\Delta \psi_2,
\end{split}
\end{align}
 where $\x = (x, y)\in \cD$, $\bfpsi(t, \x) = (\psi_1(t, \x), \psi_2(t, \x))^t$ is the streamfunction of the fluid, and $\q(t,\x) = (q_1(t, \x), q_2(t, \x))^t $ is the so-called quasi--geostrophic potential vorticity. Vorticity and streamfunction are related through $\q = -\tA \bfpsi$ with an elliptic operator $\tA$ which also includes physical constants and parameters of the model (see Eq.~\eqref{eq:ch1defA}).
%
%
%
Moreover, $J$ is the Jacobian operator $J(a,b) = \nabla^{\perp}a \cdot \nabla b$, while $dW$ represents noise which is white in time but coloured in space, with trace~class covariance operator $Q$. Finally, the model includes a (time-independent) deterministic forcing on the top layer $f = f(\x)$.
Conditions ensuring the well--posedness of this model, along with further auxiliary results, are presented in Theorem~\ref{thm:solutions}.
%

In Section~\ref{sec:method} we consider a generic dissipative SPDE on a Hilbert space $(\cH, |\cdot|)$ with associated Markov semigroup $\cP_t$ which is Feller and satisfies \nameref{asuA} (see below).
Theorem~\ref{thm:exponentialergodicity} demonstrates that the transition probabilities are a strict contraction in a  Wasserstein semimetric associated with a suitable semimetric $\td$ on $(\cH, |\cdot|)$.
This implies, in particular, the existence of a spectral gap (see Corollary~\ref{cor:spectralgap}) as well as the generalised Harris' Theorem~\ref{thm:general_harris}.
The core \nameref{asuA} of Theorem~\ref{thm:exponentialergodicity} contains a~priori type energy estimates on the solution of the SPDE~(\nameref{ref:A2}), requirements on the control~(\nameref{ref:A1}, stabilisation of the dynamics, and \nameref{ref:A3}, regularity), and the existence of an appropriate Lyapunov function~(\nameref{ref:A4}).
In Section~\ref{sec:exp_stab_QG}, the results from Section~\ref{sec:method} are applied to the stochastic 2LQG model in Equation~\eqref{eq:QG_stochasticIntro}, giving us the following result (recall that $r$ determines the bottom friction, see 2nd line of Eq.~\ref{eq:QG_stochasticIntro}):
\renewcommand{\thethmmanual}{\ref{thm:exp_stab_QG}}%
\begin{thmmanual}[See Sec.~\ref{sec:exp_stab_QG} for precise statements]
There exists $r_0$ (depending on $\nu, Q$ and the forcing $f$ from Eq.~\ref{eq:QG_stochasticIntro}) so that if $r > r_0$, and $\range Q$ is large enough (depending on $r$ in a sense to be made precise),
        %
        then there exists a unique invariant measure $\mu_*$ as well as $t>0$ and $\rho <1$ such that 
        \begin{equation}
            \Wtd{P_t(\q_0, \cdot)}{P_t(\tq_0, \cdot)} \leq \rho\, \td(\q_0, \tq_0) 
        \end{equation}
        for all $\q_0, \tq_0\in \cH$.
        %
    \end{thmmanual}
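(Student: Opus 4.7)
The plan is to verify the four conditions making up Assumption~(A) of Section~\ref{sec:method}, so that Theorem~\ref{thm:exponentialergodicity} applies directly. The work therefore reduces to exhibiting, for the 2LQG system~\eqref{eq:QG_stochasticIntro}, an appropriate phase space $\cH$, a semimetric $\td$, a Lyapunov function $V$, and a generalised-coupling control $\bfg$ acting only on the top layer, all tailored to the two-layer geometry of the problem.

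\textbf{Energy bounds and Lyapunov function (A2 and A4).} First I would establish a~priori moment estimates by testing~\eqref{eq:QG_stochasticIntro} against $\bfpsi$ (and if needed against $-\Delta \bfpsi$), using the skew-symmetry of $J(a,b)$ against $a$. The viscosity $\nu\Delta^2$ provides dissipation in both layers, the bottom friction $-r\Delta\psi_2$ provides additional damping on the second layer, and the It\^o correction is controlled by $\operatorname{tr} Q$. Together these yield exponential-moment bounds which produce a Lyapunov function of the form $V(\q) = \exp(\eta |\bfpsi|^2)$ for small $\eta>0$, verifying (A2) and (A4). The existence side of the invariant measure then follows by a standard Krylov--Bogoliubov argument.

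\textbf{Design of the control (A1 and A3).} This is the crux of the matter. Given initial data $\q_0$ and $\tq_0$, I would couple the two solutions by adding a forcing $\bfg$ only to the top-layer equation of $\tq$, in the Foias--Prodi form $\bfg = -\lambda\,\Pi_N(\tpsi_1 - \psi_1)$, where $\Pi_N$ projects onto the first $N$ eigenmodes of the Laplacian. For $\bfg$ to be realisable via Girsanov as a drift of the driving Wiener process, the image of $\Pi_N$ must lie in $\range Q$, which is why the theorem requires $\range Q$ to be ``large enough depending on $r$''. Choosing $N$ large (so that high modes are already sufficiently dissipated by $\nu\Delta^2$) and $\lambda$ large relative to the Lyapunov scale forces exponential decay of $\tpsi_1 - \psi_1$. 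Crucially, the bottom-layer difference $\tpsi_2 - \psi_2$ carries \emph{no} control: one must exploit the fact that for $r > r_0$ the operator $\nu\Delta^2 - r\Delta$ absorbs the Jacobian cross-terms in the second-layer energy identity, so the bottom layer is slaved to the top---this is the passivity condition advertised in the theorem. The resulting exponential contraction of $\tq - \q$ in a suitable norm, combined with Lipschitz continuity in the initial data, provides (A1) and (A3).

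\textbf{Contraction and main obstacle.} Combining the pathwise decay of the coupled pair with the Girsanov cost bound on $\int_0^t |Q^{-1/2}\bfg(s)|^2 ds$ delivers, for $t$ large enough, the desired Wasserstein contraction $\Wtd{P_t(\q_0,\cdot)}{P_t(\tq_0,\cdot)} \leq \rho\, \td(\q_0, \tq_0)$ with $\rho<1$ through the abstract Theorem~\ref{thm:exponentialergodicity}. The principal obstacle I foresee is the trilinear estimate controlling the coupling term $J(\tpsi_2,\tq_2) - J(\bfpsi_2,q_2)$ in the second-layer difference equation purely through the friction $-r\Delta$ and the viscosity, with no direct forcing on that layer. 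It is precisely this estimate that dictates the quantitative interrelation between $r_0$, the size of $\range Q$, and the free parameters $N$ and $\lambda$, and that requires the full strength of the passivity assumption.
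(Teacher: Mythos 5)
Your overall architecture is exactly that of the paper: verify \nameref{asuA} for a finite-dimensional feedback control acting only on the top layer, pay for it via Girsanov, and invoke Theorem~\ref{thm:exponentialergodicity}. The differences are in the technical choices. The paper's control is $G=r\,\Pi_n\Delta(\psi_1-\tilde\psi_1)$ rather than your $-\lambda\Pi_N(\tilde\psi_1-\psi_1)$; both are admissible (yours needs $\lambda\gtrsim r\lambda_N$ to produce $H^1$-damping of $\phi_1$ via the inverse Poincar\'e inequality on low modes, whereas the paper's version yields $-r\|\phi_1\|^2$ directly). More importantly, the paper never writes a separate second-layer energy identity: all of \ref{ref:A1} is carried out for the single weighted quantity $\vertiii{\bfxi}_{-1}^2=\|\bfphi\|^2+p|\phi_1-\phi_2|^2$, in which $\tA$ is self-adjoint, the term $(B(\bfphi,\tilde\bfpsi),\bfphi)$ vanishes by \eqref{eq: (B(U,V), U) = 0}, and the remaining trilinear term is bounded wholesale by $k_0\|\bfphi\|\,|\Delta\bfphi|\,|\Delta\bfpsi|$. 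This is precisely what dissolves the ``principal obstacle'' you anticipate --- the cross-layer Jacobian terms are never isolated, and the friction $rh_2\|\phi_2\|^2$ simply combines with the controlled damping $rh_1\|\phi_1\|^2$ to give $r\|\bfphi\|^2$, which must beat the time-average of $2k_B|\Delta\bfpsi|^2$; the condition $\kappa_0>\kappa_1\kappa_3/\kappa_2$ from \ref{ref:A2} is then exactly $r>r_0$. Your layer-by-layer route could be made to work but is strictly harder.

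Two points need care. First, the paper's Lyapunov function is the quadratic $V(\q)=\vertiii{\q}_{-1}^2$, not an exponential moment; if you insist on $V=\exp(\eta|\bfpsi|^2)$ with $|\cdot|$ the $\bLtwo$ norm of the streamfunction, the level-set requirement in \ref{ref:A4} fails, because $\vertiii{\q}_{-1}^2\cong\|\bfpsi\|_1^2$ is \emph{not} bounded on sets where only $|\bfpsi|_0$ is bounded --- you would have to take the exponential of the $\bH^1$ norm of $\bfpsi$ (and then prove the corresponding exponential moment bound, which is more work than \eqref{eq:ch2.3.1proof}). Second, Krylov--Bogoliubov is not needed for existence here: it comes for free from item~3 of Theorem~\ref{thm:general_harris} once the Feller property in $d_0(x,y)=|x-y|\wedge 1$ is noted, though the paper does record the Krylov--Bogoliubov alternative in a remark.
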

The proof proceeds by showing that \nameref{asuA} is satisfied. 
In Remark~\ref{rmk:viscosity} it is demonstrated that Theorem~\ref{thm:exp_stab_QG} also holds for any given $r$ and $Q$, provided the viscosity $\nu$ is sufficiently large, see \eqref{eq:ch3condition_r_nu}.
The semimetric~$\td$ appearing in Theorem~\ref{thm:exponentialergodicity} is qualitatively of the form $\td(x, y) \cong |x - y|^\alpha $ for small $x, y \in \cH$, with $\alpha < \frac{1}{2}$.
Hence our spectral gap result Corollary~\ref{cor:spectralgap} refers to functions which are, roughly speaking, of H\"{o}lder type.
\paragraph{Acknowledgments}
The work presented here would have been impossible were it not for fruitful discussions with a number of colleagues. 
In particular, we are very grateful to Benedetta Ferrario, Franco Flandoli, Valerio Lucarini, and Jeroen Wouters for criticisms, comments, suggestions, and encouragement.
GC's work was funded by the Centre for Doctoral Training in Mathematics of Planet Earth, UK EPSRC funded (grant EP/L016613/1), by the LMS Early Career Fellowship (grant ECF1920-48), and by the UK EPSRC grant EP/W522375/1. 
Furthermore, GC would like to thank the Institute Henri Poincar\'{e} for supporting the participation to the thematic trimester on \textit{The Mathematics of Climate and the Environment} in the Autumn of 2019.
%
\section{The stochastic 2LQG model}
\label{sec:themodel}
The two--layer quasi--geostrophic model has been described as ``perhaps the most widely used set of equations for theoretical studies of atmosphere and ocean''~\cite{Vallis06}. 
From a mathematical point of view the 2LQG model is a system of several 2D~Navier--Stokes equations in vorticity formulation coupled to each other. In this section we lay down its precise mathematical formulation and the main notations following closely the set up described in \cite{Bernier94} and \cite{Chueshov01_Proba}. 
\subsection{Mathematical setup and notation}
The 2LQG equations are modelling two layers of fluid one on top of each other with mean height $h_1$ for the top layer and $h_2$ for the bottom one, and with density respectively $\rho_1$ and $\rho_2$ with $\rho_1< \rho_2$
%
We consider the so-called $\beta$-plane approximation to the Coriolis effect (see \cite[Section~2.3.2]{Vallis06}); this accounts for the fact that the vertical component of the rotation changes with the latitude $y$ by writing the Coriolis parameter as
\(
 f_c(y) = f_0 + \beta y,
\)
with $f_0$ and $\beta$ assigned positive constants.
%
We also take into account the effect of the eddy viscosity on both layers, and of the the bottom friction on the second layer.
We assume that the forcing acts only on the top layer and has a non-trivial stochastic part 
which accounts for example for the effect of the wind on the upper ocean. 

Let $\cD$ be a squared domain $\cD = [0,L]\times [0,L]\subset \R^2$. Consider the following equations 
\begin{align}
\label{eq:QG_stochastic}
\begin{split}
    &d q_1 + J(\psi_1, q_1 + \beta y ) \, dt = \left(\nu\Delta^2\psi_1 \,+ f\right) dt + d W\\
    & \partial_t q_2 + J(\psi_2, q_2 + \beta y ) = \nu\Delta^2\psi_2 - r\Delta \psi_2
\end{split}
\end{align}
 where $\x = (x, y)\in \cD$, $\bfpsi(t, \x) = (\psi_1(t, \x), \psi_2(t, \x))^t$ is the streamfunction of the fluid, and $\q(t,\x) = (q_1(t, \x), q_2(t, \x))^t $ is the so-called quasi--geostrophic potential vorticity. Vorticity and streamfunction are related through
 \begin{equation}
\label{eq:simple_relation_q_psi}
\begin{split}
     q_1 = \Delta \psi_1 + F_1(\psi_2 - \psi_1) \\
     q_2 = \Delta \psi_2 + F_2(\psi_1 - \psi_2)
\end{split}
\end{equation}
where $F_1, F_2$ are positive constants. 
Moreover, $J$ is the Jacobian operator $J(a,b) = \nabla^{\perp}a \cdot \nabla b$. $W$ is a random term, white in time and colored in space, more precisely, a so-called $Q$-Wiener process, which accounts for the stochastic part of the forcing (more details in Sec.~\ref{subsec:stoch_forcing}). Furthermore we assume periodic boundary conditions for $\bfpsi$ in both directions with period $L$ and we impose that 
\begin{equation}
\label{eq:ch1zeromeanvalue}
    \int_\cD \bfpsi(t, \x) \, d\x = 0 \fa t\geq 0.
    \end{equation}
The model includes a deterministic forcing on the top layer $f = f(\x)$ (time-independent) as well 
with zero spatial averages, i.e.
\begin{equation*}
    \int_\cD f(\x) \, d\x = 0 .
\end{equation*}
The constants $F_1, F_2$ are related to physical constants by
\begin{equation}
\label{eq:defF_i}
    F_i:= \frac{f_0^2}{g' h_i},
\end{equation}
with $g'$ the reduced gravity, $g' = g(\rho_2 - \rho_1)/ \rho_0$ where $\rho_0= (\rho_1 + \rho_2)/2$ is the characteristic value for the density, and $g$ is the gravitational acceleration. Further, denote by
\begin{equation}
\label{eq:ch1def_p}
    h_1 F_1 = h_2F_2 = \frac{f_0^2}{g'} =: p.
\end{equation} 
The model \eqref{eq:QG_stochastic} includes dissipation generated by the eddy viscosity on both layers modeled by the terms $\nu\Delta^2\psi_i$ and by the friction with the bottom modeled by $r\Delta \psi_2$.
We can write \eqref{eq:QG_stochastic} in vectorial formulation introducing 
\begin{equation}
    \label{eq: def B(U,V)}
  B(\bfpsi, \bfxi) = \left( \begin{array}{r}J( \psi_1, \Delta \xi_1) + F_1J(\psi_1, \xi_2) \\ J( \psi_2, \Delta \xi_2) + F_2J(\psi_2, \xi_1) \end{array} \right) .
\end{equation} 
Using the fact that $J(\psi,\psi) =0$, we can write more compactly \eqref{eq:QG_stochastic} as  
%
\begin{equation}
\label{eq:QG_stoc_vec}
    d \q + \left( B(\bfpsi, \bfpsi) + \beta \partial_x\bfpsi \right) \, dt= \nu \Delta^2\bfpsi \,dt+  \binom{f}{- r \Delta \psi_1} \, dt+ d\mathbf{W}
\end{equation}
where $\mathbf{W} = (W, 0)^t$, and $\Delta \bfpsi = (\Delta \psi_1, \Delta \psi_2)^t$. Moreover, we can express \eqref{eq:simple_relation_q_psi} vectorial as well
\begin{equation}
\label{eq:relation_q_psi}
    \q = (\Delta + M )\bfpsi \quad \text{with } 
    M = \begin{pmatrix} 
    - F_1 & F_1 \\F_2 & - F_2
    \end{pmatrix}.
\end{equation}

Next we introduce the notations for the mathematical setup we want to consider for the two--layer quasi--geostrophic.
Let $(L^2(\cD), \|\cdot\|_0)$, $(H^k(\cD), \| \cdot \|_k)$, $k\in \R$ be the standard Sobolev spaces of $L$-periodic functions satisfying \eqref{eq:ch1zeromeanvalue}. Denote by $(\cdot , \cdot)_k$ the associated scalar product. 
%
%
%
We also introduce appropriate norms on the product spaces to deal with our coupled system. Given $\bfpsi$ and $\bfxi$ elements of $H^k \times H^k$, $k>0$ or $L^2 \times L^2$ for $k = 0$, define 
\begin{align}
    (\bfpsi, \bfxi)_k := h_1(\psi_1, \xi_1)_{k} + h_2(\psi_2, \xi_2)_{k} \nonumber\\
    \|\bfpsi\|_k^2 := h_1\|\psi_1\|_{k}^2 + h_2\|\psi_2\|_{k}^2.  \label{Hk_h_norm}
\end{align}
Then we define 
\begin{align*}
   \mathbf{L}^2 &= \left\lbrace \bfpsi\in L^2 \times L^2 \, : \, \|\bfpsi\|_0^2 < \infty \right\rbrace \quad \\
   \bH^k &= \left\lbrace \bfpsi\in H^k \times H^k \, : \, \|\bfpsi\|_k^2 < \infty \right\rbrace, \quad k > 0 
\end{align*}
and we denote with $\bH^{-k}$ the dual space of $\bH^k$, $k>0$. 

The Poincar\'{e} inequality in $\bH^k$ reads as 
\begin{equation}
\label{eq:Poincare}
    \| \bfpsi \|_k \leq \lambda_1^{-1/2}\| \bfpsi \|_{k+1}, 
\end{equation}
where $\lambda_1$ is the smallest eigenvalue of the operator $-\Delta$. 

Define the operator $\tA: \mathbf{H}^{k+2} \to \mathbf{H}^{k}$, $k\in \R$, connection of the streamfunction with the quasi--geostrophic potential vorticity
\begin{equation}
 \label{eq:ch1defA}
    \tA\bfpsi = - (\Delta + M)\bfpsi, \quad \bfpsi \in \mathbf{H}^{k+2}.
\end{equation}
It is easy to see that $\tA$ is an unbounded non--negative self--adjoint operator in $\bH^k$ with respect to the weighted scalar product $(\cdot , \cdot)_k$, indeed 
\begin{equation}
\label{eq:quadtA}
    (\tA\bfpsi, \bfpsi)_k = -(\Delta \bfpsi, \bfpsi)_k - (M\bfpsi, \bfpsi)_k
    = \| \bfpsi \|^2_{k+1} + p |\psi_1 - \psi_2|_k^2,
\end{equation}
which, due to \eqref{eq:ch1zeromeanvalue}, has an inverse which is bounded as function $\mathbf{H}^{k} \to \mathbf{H}^{k+2}$. Then for each $\q\in \bH^k$ there exists $\bfpsi\in \mathbf{H}^{k+2}$ such that $\q = - \tA \bfpsi$.
It is interesting to note that if the upper layer is more dense than the bottom, namely $\rho_1\geq \rho_2$, then $F_1, F_2$ would change sign and $\tA =-( \Delta - M)$. Therefore $\tA$ would not be non--negative and one could not develop a consistent theory for such equations, reflecting the physical impossibility of this setup.
%
%


\begin{remark}
    Since the $\bLtwo$ and $\bH^1$ norms and the $\bLtwo$ scalar product are the most used throughout this work, for the sake of simplifying notation we denote them as follows
    \begin{align*}
        |\psi| := \| \psi\|_0 &\quand \| \psi \| := \|\psi\|_1 \\
        |\bfpsi| = h_1|\psi_1| + h_2|\psi_2| := \| \bfpsi\|_0 &\quand \| \bfpsi \| = h_1 \| \psi_1\| + h_2 \|\psi_2\| := \| \bfpsi\|_1\\
        (\psi,\xi) := (\psi,\xi)_0 &\quand (\bfpsi, \bfxi) = h_1(\psi_1, \xi_1) + h_2(\psi_2, \xi_2)
    \end{align*}
\end{remark}

Finally, we introduce two new norms on the level of the potential vorticities. 
For $\q\in \Hmo$ there exists $\bfpsi\in \bH^1$ such that $\q = -\tA\bfpsi$, and we can define the norm on $\Hmo$
\begin{equation*}
    \vertiii{\q}_{-1}^2 := \| \bfpsi\|^2 + p | \psi_1 - \psi_2|^2
\end{equation*}
and, for $\q\in \bLtwo$ with $\bfpsi \in \bH^2$ define the norm on $\bLtwo$
\begin{equation*}
    \vertiii{\q}_0^2 :=|\Delta \bfpsi|^2 + p \|\psi_1 - \psi_2\|^2.
\end{equation*}
%
%
Note that by Poincar\'{e} inequality one has
\begin{align*}
\begin{split}
   \vertiii{\q(t)}^2_{-1} &= \| \bfpsi\|^2 + p |\psi_1 - \psi_2|^2 \\
   &\leq \lambda_1^{-1}\left( |\Delta \bfpsi|^2 + p \| \psi_1 - \psi_2\|^2\right) = \lambda_1^{-1} \vertiii{\q(t)}_0^2.
  \end{split}
\end{align*}
Furthermore, these norms are equivalent to $\| \cdot \|_{-1}$ and $\|\cdot \|_{0}$ respectively and have a series of useful properties, which we will show now and exploit later on. 

\begin{lemma}
\label{lemma:ch1propertiesnorms}
    Consider $\q\in \Hmo$ and $\bfpsi \in \mathbf{H}^1$ such that $\q = -\tA\bfpsi $. Then the following relations hold:
    \begin{align}
        -( \q, \bfpsi) = \vertiii{\q}_{-1}^2 \label{eq:ch1(u,v)=|u|*} \\
        \|\bfpsi \|^2 \leq \vertiii{\q}_{-1}^2 \leq a_0 \|\bfpsi \|^2 \label{eq:ch1*normpoincare}
    \end{align}
    for some $a_0> 0$.
    For $\q\in \bLtwo$ and $\bfpsi \in \mathbf{H}^2$ such that $\q = -\tA\bfpsi $, we have:
    \begin{align}
        ( \q, \Delta \bfpsi) =  \vertiii{\q}_0^2  \label{eq:ch1(u,v)=|u|2}\\
      |\Delta \bfpsi|^2 = |\Delta \bfpsi|^2\leq \vertiii{\q}_{0}^2 \leq a_0 |\Delta \bfpsi|^2 . \label{eq:ch12normpoincare}
    \end{align}
\end{lemma}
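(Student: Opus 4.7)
The plan is to prove each of the four statements by direct computation, leveraging the quadratic form identity~\eqref{eq:quadtA}, the relation $\q = -\tA\bfpsi$, the key arithmetic identity $h_1 F_1 = h_2 F_2 = p$ from~\eqref{eq:ch1def_p}, the Poincar\'e inequality~\eqref{eq:Poincare}, and integration by parts via the periodic boundary conditions. None of the steps is conceptually difficult; the only point requiring care is a cross-term computation in~\eqref{eq:ch1(u,v)=|u|2}.

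For~\eqref{eq:ch1(u,v)=|u|*}, I apply~\eqref{eq:quadtA} with $k=0$: since $\q=-\tA\bfpsi$, one immediately has $-(\q,\bfpsi) = (\tA\bfpsi,\bfpsi) = \|\bfpsi\|^2 + p|\psi_1-\psi_2|^2$, which is exactly $\vertiii{\q}_{-1}^2$. For~\eqref{eq:ch1*normpoincare}, the lower bound is obtained by dropping the non-negative term $p|\psi_1-\psi_2|^2$. For the upper bound, the estimate
\[
|\psi_1-\psi_2|^2 \leq 2(|\psi_1|^2 + |\psi_2|^2) \leq \frac{2}{\min(h_1,h_2)}|\bfpsi|^2 \leq \frac{2}{\lambda_1 \min(h_1,h_2)}\|\bfpsi\|^2
\]
yields $a_0 = 1 + \tfrac{2p}{\lambda_1\min(h_1,h_2)}$.

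For~\eqref{eq:ch1(u,v)=|u|2}, I write $\q = -\tA\bfpsi = (\Delta + M)\bfpsi$, so
\[
(\q, \Delta\bfpsi) = (\Delta\bfpsi, \Delta\bfpsi) + (M\bfpsi, \Delta\bfpsi) = |\Delta\bfpsi|^2 + (M\bfpsi, \Delta\bfpsi).
\]
The cross term is the delicate part: expanding $M\bfpsi$ in components under the weighted scalar product and using $h_1 F_1 = h_2 F_2 = p$, the two contributions combine into $-p(\psi_1-\psi_2,\Delta(\psi_1-\psi_2))$. Integration by parts (legal thanks to the periodic boundary conditions and zero-mean assumption~\eqref{eq:ch1zeromeanvalue}) then gives $p\|\psi_1-\psi_2\|^2$, so $(\q,\Delta\bfpsi)=\vertiii{\q}_0^2$ as required.

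Finally, for~\eqref{eq:ch12normpoincare} the lower bound follows again by dropping the non-negative $p\|\psi_1-\psi_2\|^2$ term. The upper bound proceeds exactly as for~\eqref{eq:ch1*normpoincare}, one derivative higher:
\[
\|\psi_1-\psi_2\|^2 \leq \frac{2}{\min(h_1,h_2)}\|\bfpsi\|^2 \leq \frac{2}{\lambda_1\min(h_1,h_2)}|\Delta\bfpsi|^2,
\]
using Poincar\'e applied to $\nabla\bfpsi$, which yields the same constant $a_0$ as before. Thus the main obstacle is essentially bookkeeping: making sure that the weights $h_1,h_2$ in the product scalar product combine correctly with $F_1,F_2$ through the identity $h_1 F_1 = h_2 F_2 = p$ so that the cross term in the $M$-contribution reduces to a clean $H^1$ seminorm of the difference $\psi_1-\psi_2$.
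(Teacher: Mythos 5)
Your proposal is correct and follows essentially the same route as the paper: identity \eqref{eq:quadtA} for \eqref{eq:ch1(u,v)=|u|*}, the cancellation via $h_1F_1=h_2F_2=p$ and integration by parts for the cross term in \eqref{eq:ch1(u,v)=|u|2}, and Poincar\'e combined with a parallelogram-type bound for the norm equivalences, arriving at the same constant $a_0 = 1 + 2\lambda_1^{-1}\max(F_1,F_2)$ (you merely apply Poincar\'e and the parallelogram inequality in the opposite order).
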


\begin{proof}
    Equality \eqref{eq:ch1(u,v)=|u|*} is a direct calculation, cf. also \eqref{eq:quadtA}.
   In a similar way we can show \eqref{eq:ch1(u,v)=|u|2}. By definition of $\q$ we have
     \begin{equation*}
       ( \q, \Delta\bfpsi) =  (\Delta\bfpsi, \Delta\bfpsi) + (M\bfpsi, \Delta \bfpsi) .
     \end{equation*}
     Then by definition of $M$ \eqref{eq:relation_q_psi}, relation \eqref{eq:ch1def_p} and Green's theorem 
     \begin{align*}
     \begin{split}
         (M\bfpsi, \Delta \bfpsi)& = -h_1F_1(\psi_1 - \psi_2, \Delta \psi_1) + h_2F_2(\psi_1 - \psi_2, \Delta \psi_2) \\
       &=  p ((-\Delta)^{1/2} (\psi_1 - \psi_2), (-\Delta)^{1/2}\psi_1) - p ((-\Delta)^{1/2} (\psi_1 - \psi_2), (-\Delta)^{1/2}\psi_2). 
     \end{split}
     \end{align*}
     Therefore we have 
     \begin{equation*}
         ( \q, \Delta\bfpsi) =  |\Delta\bfpsi|^2 + p \| \psi_1 - \psi_2\|^2 =  \vertiii{\q}_0^2. 
     \end{equation*}
    
    Moving on to \eqref{eq:ch1*normpoincare}, the lower bound follows from \eqref{eq:quadtA} and the upper bound is a consequence of the Poincar\'{e} inequality \eqref{eq:Poincare} and the parallelogram law. Indeed we have 
    \begin{align*}
        p |\psi_1 - \psi_2 |^2 &\leq  p \lambda_1^{-1}\| \psi_1 - \psi_2 \|^2 \\
        &\leq  \frac{2p}{\lambda_1\min(h_1, h_2)} ( h_1\| \psi_1 \|^2 + h_2\| \psi_2 \|^2)  \\
         &= 2\lambda_1^{-1}\max(F_1, F_2) \| \bfpsi\|^2.
    \end{align*}
    Setting $a_0$ to be $1 +2\lambda_1^{-1}\max(F_1, F_2) $, we see that \eqref{eq:ch1(u,v)=|u|2} holds.
    
    With similar arguments 
    \eqref{eq:ch12normpoincare} can be shown as well.
   
\end{proof}

\autoref{tab:norms} contains a summary of the spaces and relative norms used throughout this work. 
\begin{table}[ht]
    \centering
    \begin{tabular}{ll}
    Space & Norm \\[1ex] 
     $\bH^{k}$ & $\|\bfpsi\|_k^2 = h_1\|\psi_1\|_k^2 + h_2\| \psi_2\|_k^2$ \\ 
     $\bLtwo$ = $\bH^0$ & $|\bfpsi|^2= h_1|\psi_1|^2 + h_2|\psi_2|^2$ \\ 
     $\bH^1 $& $\|\psi\|^2 = h_1\|\bfpsi_1\|^2 + h_2\| \psi_2\| ^2$ \\[1ex]
      $\bLtwo$
      & $\vertiii{\q}_0^2 = \| \bfpsi\|_2^2 + p \| \psi_1 - \psi_2\|^2 $\\ 
      $\Hmo$
      & $\vertiii{\q}_{-1}^2 = \| \bfpsi \|^2 + p | \psi_1 - \psi_2|^2$
      
    \end{tabular} 
    \caption{\label{tab:norms}Notations for the two--layer quasi--geostrophic model. 
            Rows~1--3 will be mainly used for the streamfunctions $\bfpsi$. 
            Rows~4,5 will be mainly used for the potential vorticities $\q$, i.e.\ functions in the range of the operator $-\tA$. 
            Note that $\q = -\tA \bfpsi$ in the second block.}
\end{table}

    
    %
     Finally, standard bounds on  the Jacobian see for example \cite[Lemma~3.1]{Chueshov01_Proba}, yield the following bound for the bilinearity $B$:
    \begin{lemma}[\cite{thesis}]
    \label{lemma:ch1propertiesB}
    Let $B$ be the bilinear operator defined in \eqref{eq: def B(U,V)}, then for  $\bfpsi,\bfxi, \bfphi\in \bH^2$ 
    \begin{align}
        (B(\bfpsi,\bfxi), \bfphi) = - (B(\bfphi,\bfxi), \bfpsi), \label{eq: (B(U,V), W)= - (B(W,V), U) } \\
       (B(\bfpsi,\bfxi), \bfpsi) = 0. \label{eq: (B(U,V), U) = 0} 
    \end{align}
    Moreover, for $\bfpsi, \bfxi \in \bH^2$, there exists positive constant $k_0$ such that
    \begin{equation}
        |(B(\bfpsi,\bfpsi),\bfxi)|\leq k_0 \| \bfpsi\| |\Delta \bfpsi|  |\Delta \bfxi| .
        \label{eq:bound(B(u,u),v)}
    \end{equation}
    \end{lemma}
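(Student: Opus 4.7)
The plan is to split the lemma into (i) the two algebraic identities \eqref{eq: (B(U,V), W)= - (B(W,V), U) }, \eqref{eq: (B(U,V), U) = 0}, which are pure integration-by-parts statements, and (ii) the quantitative trilinear bound \eqref{eq:bound(B(u,u),v)}, which is where the analytic work lies.

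For (i), I expand $(B(\bfpsi,\bfxi),\bfphi)$ using the definition \eqref{eq: def B(U,V)} of $B$ together with the weighted $\bLtwo$ inner product, arriving at the sum
\[
(B(\bfpsi,\bfxi),\bfphi)= h_1(J(\psi_1,\Delta\xi_1),\phi_1)+h_1F_1(J(\psi_1,\xi_2),\phi_1)+h_2(J(\psi_2,\Delta\xi_2),\phi_2)+h_2F_2(J(\psi_2,\xi_1),\phi_2).
\]
To each of these four scalar integrals I apply the classical Jacobian identity $(J(a,b),c)_{L^2}=-(J(c,b),a)_{L^2}$, which is an elementary integration by parts on the class of $L$-periodic zero-mean functions (note $\nabla^\perp$ is divergence-free, so no boundary or extra terms appear). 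Every summand changes sign, and the resulting expression is exactly $-(B(\bfphi,\bfxi),\bfpsi)$, which is \eqref{eq: (B(U,V), W)= - (B(W,V), U) }. Identity \eqref{eq: (B(U,V), U) = 0} then follows immediately by taking $\bfphi=\bfpsi$ and observing $2(B(\bfpsi,\bfxi),\bfpsi)=0$.

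For (ii), I expand $(B(\bfpsi,\bfpsi),\bfxi)$ in the same way and estimate the four Jacobian integrals separately. The two ``stretching'' terms $h_iF_i(J(\psi_i,\psi_j),\xi_i)$, $i\neq j$, are immediately controlled by Hölder $|(J(a,b),c)|\leq\|\nabla a\|_{L^4}\|\nabla b\|_{L^2}\|c\|_{L^4}$ combined with the 2D Ladyzhenskaya inequality $\|f\|_{L^4}^2\leq C|f|\|\nabla f\|$ and the Poincaré inequality \eqref{eq:Poincare}. For the two ``advective'' terms $h_i(J(\psi_i,\Delta\psi_i),\xi_i)$ I first use the cyclic symmetry $(J(a,b),c)=(J(b,c),a)$ to rewrite the integral as $(J(\xi_i,\psi_i),\Delta\psi_i)$, estimate $\Delta\psi_i$ in $L^2$, and control the remaining product $\nabla^\perp\xi_i\cdot\nabla\psi_i$ by Hölder and Ladyzhenskaya applied to each gradient. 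Each of these estimates falls within the scope of the standard 2D Jacobian bounds collected, e.g., in Lemma~3.1 of \cite{Chueshov01_Proba}. To assemble a single inequality from the four component bounds, I use Cauchy--Schwarz on the weighted sum over the layer index,
\[
h_1a_1b_1c_1+h_2a_2b_2c_2\leq C\Bigl(\sum_i h_ia_i^2\Bigr)^{1/2}\Bigl(\sum_i h_ib_i^2\Bigr)^{1/2}\Bigl(\sum_i h_ic_i^2\Bigr)^{1/2},
\]
and recognise the three factors as $\|\bfpsi\|$, $|\Delta\bfpsi|$ and $|\Delta\bfxi|$ according to the weighted norms defined in \eqref{Hk_h_norm}, thereby obtaining the stated bound with some absolute constant $k_0$ depending only on $F_1,F_2,h_1,h_2,\lambda_1$.

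The main technical obstacle is the advective term $(J(\psi_i,\Delta\psi_i),\xi_i)$: because the Laplacian sits in the middle slot of the Jacobian, no direct $H^1$-bound on the $\psi_i$-factor is available, and one must first invoke the cyclic symmetry of $J$ to move the $\Delta$ onto an outer slot. The Hölder exponents then have to be chosen so that the two gradient factors sit in $L^4$ and are controlled by Ladyzhenskaya while $|\Delta\psi_i|$ is controlled in $L^2$; this is what produces the combination $\|\psi_i\|\,|\Delta\psi_i|\,|\Delta\xi_i|$ (rather than a more singular $|\Delta\psi|^{3/2}$-type factor) and hence the linear dependence on $\|\bfpsi\|$ in the final estimate.
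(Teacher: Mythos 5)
Your treatment of the two algebraic identities and of the ``stretching'' terms $F_i(J(\psi_i,\psi_j),\xi_i)$ is fine and is the standard route (the paper itself does not prove this lemma; it cites the thesis and the Jacobian bounds of Chueshov--Duan--Schmalfuss). The genuine gap is in the advective term. After rewriting $(J(\psi_i,\Delta\psi_i),\xi_i)=(J(\xi_i,\psi_i),\Delta\psi_i)=\int\nabla^\perp\xi_i\cdot\nabla\psi_i\,\Delta\psi_i$, the estimate you describe --- $\Delta\psi_i$ in $L^2$ and both gradients in $L^4$ via Ladyzhenskaya --- yields
\begin{equation*}
\|\nabla\xi_i\|_{L^4}\,\|\nabla\psi_i\|_{L^4}\,|\Delta\psi_i|
\;\leq\; C\,\|\xi_i\|^{1/2}|\Delta\xi_i|^{1/2}\,\|\psi_i\|^{1/2}|\Delta\psi_i|^{1/2}\,|\Delta\psi_i|
\;\leq\; C'\,\|\psi_i\|^{1/2}|\Delta\psi_i|^{3/2}\,|\Delta\xi_i|,
\end{equation*}
i.e.\ exactly the ``more singular'' $|\Delta\psi|^{3/2}$-type factor you claim to avoid. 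Since $|\Delta\psi_i|/\|\psi_i\|$ is unbounded on $\bH^2$, the quantity $\|\psi\|^{1/2}|\Delta\psi|^{3/2}$ is \emph{not} dominated by $\|\psi\|\,|\Delta\psi|$, so your argument proves a strictly weaker inequality than \eqref{eq:bound(B(u,u),v)}. The distinction is not cosmetic: in the proof of Assumption~\ref{ref:A1} the bound is followed by Young's inequality to produce $\tfrac{\nu}{2}|\Delta\bfphi|^2+k_B|\Delta\bfpsi|^2\|\bfphi\|^2$, and the $3/2$-power version would instead leave a non-integrable $|\Delta\bfpsi|^4$ in the Gronwall exponent.

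To actually get \eqref{eq:bound(B(u,u),v)} you need one more integration by parts that exploits $\dv(\nabla^\perp\xi_i)=0$: writing $\Delta\psi_i=\partial_j\partial_j\psi_i$,
\begin{equation*}
\int\nabla^\perp\xi_i\cdot\nabla\psi_i\,\Delta\psi_i
=-\int\partial_j(\nabla^\perp\xi_i)_k\,\partial_k\psi_i\,\partial_j\psi_i
-\tfrac12\int\nabla^\perp\xi_i\cdot\nabla|\nabla\psi_i|^2,
\end{equation*}
and the last integral vanishes because $\nabla^\perp\xi_i$ is divergence-free. The surviving term carries both derivatives on $\xi_i$, so H\"older with exponents $(2,4,4)$ and Ladyzhenskaya give the bound $\|D^2\xi_i\|_{L^2}\,\|\nabla\psi_i\|_{L^4}^2\leq C\,|\Delta\xi_i|\,\|\psi_i\|\,|\Delta\psi_i|$, which, combined with your treatment of the remaining terms and the weighted Cauchy--Schwarz over the layers, yields the lemma as stated.
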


    \subsection{The stochastic forcing} 
    \label{subsec:stoch_forcing}
    On the first layer we consider a forcing with a stochastic component which is white in time and colored in space. More precisely, consider the probability space $(\Omega, \mathcal{F}, \mathbb{P})$ and let $W$ be a $L^2$--valued Wiener process on it with covariance operator $Q$. We assume $Q$ to be trace class in $L^2$, namely given $\lbrace e_k \rbrace_{k\in \N}$, a complete orthonormal basis of $L^2$ 
    \begin{equation*}
        \Tr Q := \sum_{k=1}^{\infty} \langle Q e_k, e_k \rangle  < \infty.
    \end{equation*} 
    We also assume that $Q$ and $-\Delta$ commute, meaning that the $\{e_k\}_{k \in \N}$ can be taken as the eigenvectors of $-\Delta$.
Note that all the $e_k$ have to fulfill \eqref{eq:ch1zeromeanvalue} and hence $\int W(t)(\x) d\x =0$ as well, but this follows from the fact that we defined the spaces $L^2$ and $H^k$ so that \eqref{eq:ch1zeromeanvalue} holds.
For later use we note the following facts. 
Let $\cH$ be a general separable Hilbert space and $W$ a Wiener process with values in $\cH$ and trace--class covariance operator $Q$.
    It can be shown that $W(t)$ can be written as a sum of real valued Wiener processes, namely
    \begin{equation*}
        W(t)= \sum_{k=1}^{\infty} \sqrt{\sigma_k}\beta_k(t)e_k,
    \end{equation*}
    where $\sigma_k$ are eigenvalues of $Q$ with $\lbrace e_k\rbrace_{k\in \N}$ a corresponding  orthonormal system of eigenfunctions, and $\beta_k(t)$, $k\in \N$, are independent real valued Brownian motions on $(\Omega, \mathcal{F}, \mathbb{P})$.
    Then, we define for each $n\in \N$ the orthogonal projection $\Pi_n: \cH \to \Span\{ e_1, \ldots,e_n\}  $. Note that (see e.g.~\cite[Section~4.2.2]{DaPratoZab2014}) $\Pi_nW$ and $(1 - \Pi_n)W$ are independent Wiener processes with covariance matrices 
    \begin{equation}
        \label{eq:covarianceQn}
        Q_n = \sum_{k= 0}^n \sigma_k e_k\otimes e_k 
                \quad  \text{resp.} \quad 
        Q^n = \sum_{k= n+1}^\infty \sigma_k e_k\otimes e_k. 
    \end{equation}
%
%
 Furthermore, let $\tW_n$ be a $n$-dimensional Brownian motion, equal in law to $\Pi_n W $. Then, as all $\beta_k$ are mutually independent, $\tW(t):=\tW_n + (1 - \Pi_n)W$ is a $Q$-Wiener process equal in law to $W$.
\subsection{Solutions of the stochastic 2LQG model}
\label{subsec:solutions} 
The deterministic version of \eqref{eq:QG_stoc_vec} has been shown to be well--posed in \cite{Bernier94}, while, for the stochastic model, the following result holds:
    %
\begin{theorem}[\cite{thesis}]
\label{thm:solutions}
        Consider the system \eqref{eq:QG_stoc_vec} with initial condition $\q_0\in \Hmo$, deterministic forcing $f\in H^{-2}$ and the covariance operator $Q$ trace class in $L^2$. Then there exists a pathwise unique weak solution, namely for a.a. $\omega \in \Omega$ there exists a unique $\q$ such that 
         \begin{equation*}
            \q\in C([0,T]; \Hmo)\cap L^2(0,T; \bLtwo) 
        \end{equation*}
        and that satisfies \eqref{eq:QG_stoc_vec} in the integral sense: 
        \begin{multline*}
             \left( \q(t), \varphi \right) + \int_0^t \left( B(\bfpsi, \bfpsi), \varphi \right) + \beta \left( \partial_x \bfpsi, \varphi \right)\, ds = (\q_0, \varphi) + \nu \int_0^t \left( \Delta\bfpsi, \Delta \varphi \right)  \;ds  \\ 
             + \int_0^t h_1( f, \varphi_1 )- rh_2(\Delta \psi_2, \varphi_2) \;ds +  h_1\left( W(t), \varphi_1 \right)
        \end{multline*}
        for all $\varphi= (\varphi_1, \varphi_2)^t \in \bH^2$, $t \in [0,T]$.
        Furthermore, $\q(t, \q_0)$ is continuous with respect to the initial condition $\q_0$ as a function in $\Hmo$.
    \end{theorem}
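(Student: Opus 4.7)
The plan is to use a standard pathwise Doss--Sussmann/Galerkin scheme for dissipative SPDEs with additive noise. Since the noise is additive and trace class in $L^2$, I would first subtract off the stochastic convolution: let $\bfv$ denote the pathwise solution of the linear SPDE
\begin{equation*}
d\bfv + \mathcal{A}\bfv \, dt = d\mathbf{W}, \qquad \bfv(0)=0,
\end{equation*}
where $\mathcal{A}$ is the dissipative generator associated with $\nu\Delta^2\bfpsi$ expressed in terms of $\q$ via $\bfpsi = -\tA^{-1}\q$. Because $Q$ is trace class in $L^2$ and the semigroup generated by $\mathcal{A}$ smooths at the biharmonic rate, the standard factorisation argument of~\cite{DaPratoZab2014} gives $\bfv$ with almost surely continuous trajectories in $\bLtwo$ and $\bfv \in L^2(0,T;\bH^2)$ a.s. Setting $\y := \q - \bfv$ reduces the problem to the pathwise solvability of a PDE with deterministic structure and random input $\bfv$, with initial datum $\q_0$ and no explicit noise term.

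\textbf{Existence by Galerkin approximation.} To construct $\y$, project the reduced equation onto $\Span\{e_1,\ldots,e_n\}$ and solve the resulting finite-dimensional ODE locally in time. Testing the Galerkin approximation against the corresponding streamfunction and invoking \eqref{eq: (B(U,V), U) = 0} to kill the inertial term, \eqref{eq:ch1(u,v)=|u|*} to identify the $\Hmo$ norm, and \eqref{eq:quadtA} to extract $\bLtwo$ dissipation produces a differential inequality of schematic form
\begin{equation*}
\frac{d}{dt}\vertiii{\q^n}_{-1}^2 + \nu\vertiii{\q^n}_0^2 \leq C\bigl(f,\bfv,r,\beta\bigr),
\end{equation*}
once the perturbative contributions coming from the streamfunction of $\bfv$ and from the deterministic forcing are absorbed using Young's and interpolation inequalities; the Coriolis term is antisymmetric in the relevant scalar product and the bottom-friction term is dissipative. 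Integrating yields uniform bounds in $L^\infty(0,T;\Hmo)\cap L^2(0,T;\bLtwo)$, a uniform dual bound on $\partial_t \q^n$ follows directly from the equation, and Aubin--Lions then extracts a subsequence converging strongly in $L^2(0,T;\Hmo)$, which is enough to pass to the limit in the nonlinearity via the trilinear estimate \eqref{eq:bound(B(u,u),v)}, yielding a weak solution of \eqref{eq:QG_stoc_vec}.

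\textbf{Uniqueness and continuity in the initial datum.} For pathwise uniqueness and continuous dependence on $\q_0$, let $\q^1,\q^2$ be two weak solutions with associated streamfunctions $\bfpsi^1,\bfpsi^2$. Because the noise is additive, the difference satisfies a purely deterministic equation; writing
\begin{equation*}
B(\bfpsi^1,\bfpsi^1) - B(\bfpsi^2,\bfpsi^2) = B(\bfpsi^1-\bfpsi^2,\bfpsi^1) + B(\bfpsi^2,\bfpsi^1-\bfpsi^2),
\end{equation*}
testing against $\bfpsi^1-\bfpsi^2$, and using \eqref{eq:bound(B(u,u),v)} together with Young's inequality to absorb the dissipation, Gronwall yields
\begin{equation*}
\vertiii{\q^1(t)-\q^2(t)}_{-1}^2 \leq \vertiii{\q^1(0)-\q^2(0)}_{-1}^2 \, \exp\!\left( C\int_0^t |\Delta\bfpsi^1|^2\,ds \right),
\end{equation*}
which delivers uniqueness (take $\q^1(0)=\q^2(0)$) and continuity with respect to the initial condition in $\Hmo$ simultaneously.

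\textbf{Main obstacle.} The principal technical difficulty is the pathwise nature of the Gronwall exponent: $\int_0^t|\Delta\bfpsi^1|^2\,ds$ is only $\bP$-almost surely finite through the $L^2(0,T;\bLtwo)$ bound produced by the energy estimate, so the entire argument must be carried out on a set of full measure with careful verification that measurability and adaptedness propagate through the Galerkin limit. A secondary point is ensuring that the stochastic convolution $\bfv$ carries enough regularity so that the mixed bilinear terms involving $\bfpsi^{\bfv} := -\tA^{-1}\bfv$ and the streamfunction of $\y$ can still be controlled by \eqref{eq:bound(B(u,u),v)}; here the biharmonic smoothing combined with the trace-class hypothesis on $Q$ is what makes the whole scheme work.
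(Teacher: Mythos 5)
Your proposal follows essentially the same route as the paper: the paper likewise exploits the additive structure of the noise by subtracting an auxiliary Gaussian process (there an Ornstein--Uhlenbeck process $\eta$ solving $d\eta - \alpha\Delta\eta\,dt = dW$, placed in the first component of $\q$, rather than the stochastic convolution with the full linearised generator), reducing \eqref{eq:QG_stoc_vec} to a pathwise deterministic PDE with random coefficients that is then handled by the Galerkin/energy-estimate machinery of \cite{Bernier94}, with uniqueness and continuous dependence obtained from the same Gronwall bound in $\Hmo$ with exponent $\int_0^t |\Delta\bfpsi^1|^2\,ds$. The only inaccuracy is cosmetic: on the level of the vorticity $\q$ the operator $\nu\Delta^2\tA^{-1}$ is second order, so the convolution smooths at the heat rate and lands in $C([0,T];\bLtwo)\cap L^2(0,T;\bH^1)$ rather than $L^2(0,T;\bH^2)$, which is still more than enough since the bilinear estimate \eqref{eq:bound(B(u,u),v)} only requires the associated streamfunction in $\bH^2$.
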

    The well--posedness of Equation~\eqref{eq:QG_stoc_vec} was shown in \cite{thesis}, using the same approach as in \cite{Flandoli94} for Navier--Stokes. The main idea is to reformulate Equation~\eqref{eq:QG_stoc_vec} into an equation for $\tq: = \q - (\eta, \, 0)^t$, where $\eta$ is an auxiliary Ornstein-Uhlenbeck process, 
    namely, a solution $\eta$ of 
    \begin{equation*}
        d\eta - \alpha \Delta \eta \, dt = dW
    \end{equation*}
    for an appropriate value of the constant $\alpha>0$.
    Since the covariance matrix $Q$ is trace class in $L^2$, by the classic theory of SDEs we know $\eta$ has a continuous version with values in $L^2$. 
    As $\tq$ now satisfies a deterministic equation with random coefficients, existence and uniqueness can be established with PDE~techniques as done for the multi--layer~QG model in \cite{Bernier94}.
     In \cite{thesis} it is shown that there exists a unique $\tq\in C([0,T]; \Hmo)\cap L^2(0,T; \bLtwo)$ so that $\tq + \eta$ is solution of \eqref{eq:QG_stoc_vec}. This ensures the well-posedness of \eqref{eq:QG_stoc_vec} as given in \autoref{thm:solutions}. 
    Furthermore the solution can be shown to be continuous with respect to the driving noise, meaning that for any $t\geq 0$ and initial condition $\q_0$ there exists a continuous function 
    \[ \Phi_t^{0} : C([0,t]; L^2) \to \Hmo \]
    such that $\q(t, \q_0) = \Phi_t^0\left(\{W(s)\}_{s\leq t}\right)$. This follows from the fact that $\tq$ depends continuously on $\eta$ and that the Ornstein--Uhlenbeck process can be expressed as continuous function of the driving noise. 
    This reformulation of \eqref{eq:QG_stoc_vec} was first introduced in \cite{Chueshov01_Proba}, where the associated random dynamical system is studied and the existence of an absorbing set is demonstrated. 
    %
    %
    %
    
    %
    On the Hilbert space $\Hmo$ we consider the Borel $\sigma$--algebra $\cB(\Hmo)$.  We denote by $B_b(\Hmo)$ the space of real bounded Borel measurable functions on $\Hmo$, by $C_b(\Hmo)$ the continuous bounded functions, and by $\cM_1(\Hmo)$ the space of probability measures on $\Hmo$. Given the uniqueness of the solutions by \autoref{thm:solutions}, it can be shown that $\{\q(t, \q_0), \, t \geq 0\}$ is a Markov process (see e.g.~Theorem~9.14 in \cite{DaPratoZab2014}), where $\q(t, \q_0)$ is the solution seen as a random variables, with the randomness introduced by the driving Brownian motion. For any $\q_0\in \Hmo$ and $A\in \cB(\Hmo)$, and $t>0$, define the associated Markov transition probabilities as 
    \begin{equation*}
        P_t(\q_0, A) = \bP \left( \q(t, \q_0)\in A \right) = \Law (\q(t, \q_0))(A).
    \end{equation*}
    The corresponding Markov semigroup is defined as
    \begin{equation*}
        \cP_t\varphi(\q_0) = \int \varphi(x) P_t(\q_0, dx) = \E\, \varphi(\q(t, \cdot ; \q_0))
    \end{equation*}
    for any $\varphi\in B_b(\Hmo)$, and $\cP_t^*$ is its dual acting on $\cM_1$ i.e.
    \begin{equation*}
        \cP_t^*\mu(A) = \int \cP_t \mathbbm{1}_A(x) \, \mu(dx) =  \int P_t(x, A) \, \mu(dx).
    \end{equation*}
    
    Given the continuous dependence of $\q$ on the initial condition it follows that the semigroup $\cP_t$ is \textit{Feller} namely that for any $\varphi \in C_b(\Hmo)$ and any $t\geq 0$ one has $\cP_t \varphi \in C_b(\Hmo)$.
%
\section{Methodology}
\label{sec:method}
We discuss a generalised Harris theorem for Markov processes in a Hilbert space.
The presentation closely follows~\cite[Section~4]{HMattSch11} and, building on a framework developed in \cite{butkovsky2020}, we provide conditions particularly suitable for application to dissipative SPDEs.
\subsection{Basic definitions and notation}
Let $\cH$ be a Hilbert space with scalar product $(\cdot, \, \cdot)$ and associated norm $|\cdot|$ and Borel $\sigma$--algebra $\cB(\cH)$.
We denote by $B_b(\cH)$ the space of all real bounded Borel functions and by $\cM_1(\cH)$ the space of probability measures on $\cH$.
Consider two probability measures $\mu_1, \mu_2 \in \cM_1(\cH)$.
A probability measure $\Gamma$ on $\cH \times \cH$ is called \emph{coupling} of $\mu_1, \mu_2$ if its marginals agree with $\mu_1, \mu_2$; more specifically, if we let $\pi_1(x,y) = x$, $\pi_2(x,y) = y$ be the projections of $\cH \times \cH$ onto its two components, then
\begin{equation*}
    \pi_1^* \Gamma  = \mu_1, \quand \pi_2^* \Gamma = \mu_2.
\end{equation*}
We denote the set of all such couplings as $\mathcal{C}(\mu_1, \mu_2)$. Equivalently we call a pair of random variables $(\xi_1, \xi_2)$ a coupling of $\mu_1, \mu_2$ if $\Law \xi_1 = \mu_1$ and $\Law \xi_2 = \mu_2$.
%
%

%
A function $d: \cH\times \cH \to \R_+$ is a {\em semimetric} (sometimes also referred to as a {\em distance--like function}) when it is symmetric, lower semi-continuous and such that $d(x,y) = 0 \Leftrightarrow x = y $. When the symmetry fails, we refer to $d$ as a \textit{premetric}.  
 %
%
A semimetric $d$ on $\cH$ can be lifted to a semimetric on the level of probabilities called \textit{Wasserstein semimetric} $ W_{d}$: given two probability measures $\mu_1, \mu_2\in \mathcal{M}_1(\cH)$ set
    \begin{equation*}
        W_{d}(\mu_1, \mu_2) := \inf_{\Gamma \in  \mathcal{C}(\mu_1, \mu_2)} \int d(x, y) \; \Gamma(dx, dy).
    \end{equation*}
%
%
The classic coupling lemma (see e.g.~\cite[Theorem~4.1]{Villani}) ensures that the infimum in this definition is always reached by a coupling, given the lower semi-continuity of $d$.
Further, note that, if $d$ satisfies the triangular inequality, namely it is a metric, then $W_d$ is a metric on $\cM_1$.
An important example is the {\em total variation distance} $d_{TV}(\mu_1, \mu_2)$ between two probability measures $\mu_1, \mu_2 \in \mathcal{M}_1$, defined as the Wasserstein metric associated with the discrete metric $d(x,y) = \mathbbm{1}(x\neq y)$.
%
%
%
There is an important connection between Wasserstein semimetrics and weak norms associated with Lipschitz functions.
Given a semimetric $d$, define the associated Lipschitz seminorm as 
     \begin{equation}
         \label{eq: lipschitz metric}
        \|\varphi\|_d := \sup_{x\neq y }\frac{|\varphi(x) - \varphi(y)|}{d(x, y)}.
     \end{equation}
     It is immediate that $ \|.\|_{d}$ is positive homogenous and satisfies the triangle inequality. 
     However, $\| \varphi\|_{d}= 0$ only implies that $\varphi$ is a constant function.
For any two probability measures $\mu_1, \mu_2 \in \mathcal{M}_1$ it is easy to see that
    \begin{equation}
    \label{eq:kantorovich}
       \sup_{\|\varphi\|_{d}\leq 1} \left| \langle \varphi, \mu_1 \rangle -  \langle \varphi, \mu_2 \rangle\right| \leq  W_{d}(\mu_1, \mu_2).
    \end{equation}
  When $d$ is a metric however, the Kantorovich-Rubinstein formula (see e.g.~\cite{dudley_2002}) states that~\eqref{eq:kantorovich} is an equality.
%
%
\subsection{A generalisation of Harris' theorem}        
In a finite dimensional context Harris' theorem (see e.g.~\cite[Theorem~1.5]{HMattSch11}, \cite{YetAnother}, \cite{meyn_tweedie}) provides conditions under which the transition probabilities of a Markov process converge to an invariant measure in the total variation distance. 
   In fact it ensures convergence if there exists a so-called \emph{small set} which is visited infinitely often by the process, and the speed of convergence is related to how fast the process returns to such sets. A set $A \subset \cH$ is \emph{small} if there exists a time $t>0$ and a constant $\varepsilon>0$ such that 
    \begin{equation*}
        d_{TV} \left( P_t(x, \cdot) , P_t(y, \cdot)\right) \leq 1 - \varepsilon \fa x, y \in A.
    \end{equation*}
%
%
As explained for example in \cite{YetAnother}, typical candidates for small sets are the level set of so called Lyapunov functions.
        A measurable function $V: \cH \to \R_+$  is called \emph{Lyapunov function} for $\cP_t$ if there exist positive constants $C_V$, $\gamma$, $K_V$ such that 
        \begin{equation*}
        \label{def:LyapunovfctHMS}
            \cP_t V(x) \leq C_V e^{-\gamma t} V(x) + K_V \fa x\in \cH, \, t\geq 0. 
        \end{equation*}
%
%
However, a major difficulty with applying Harris' theorem in an infinite dimensional context is that the transition probabilities $P_t(x, \cdot )$ and $P_t(y, \cdot)$ might be singular for different initial conditions $x \neq y$. In that case one has that
     \(d_{TV} \left( P_t(x, \cdot) , P_t(y, \cdot)\right)  = 1\)
and there are no small sets.
In \cite{HMattSch11} in order to retrieve a version of Harris' theorem in infinite dimensions, a weaker notion of small set is introduced, one which uses a Wasserstein semimetric, rather than the total variation distance: 
   \begin{definition}[{\cite[Definition~4.4]{HMattSch11}}]
   \label{def:dsmallset}
        Let $\cP$ be a Markov operator on $\cH$ with associated transition function $P(\cdot, \cdot)$ and let $d: \cH \times \cH \to [0,1]$ be a semimetric on $\cH$. A set $K\subset \cH$ is called \emph{d-small} for $\cP$ if there exists $\varepsilon>0$ such that 
        \begin{equation*}
            W_d(P(x, \cdot), P(y, \cdot)) \leq 1 - \varepsilon
        \end{equation*}
        for all $x,y\in K$.
   \end{definition}
   Note that the range of the semimetric is restricted to the unit interval.
   This does not, however, impose a restriction,  since if $d$ is a semimetric, then so is $d \wedge 1$, and furthermore $W_{d \wedge 1} \leq W_d$.
   The reason why the range of $d$ is restricted in this way is that large values of $x$ and $y$ will be dealt with separately using the Lyapunov function, as we will see.
    There is a last ingredient necessary to give a general form of Harris' theorem for Wasserstein semimetrics: the semimetric has to be contracting for the semigroup $\lbrace \cP_t, \, t\geq 0 \rbrace$. 
    In the original statement of Harris' theorem this condition is not included as it is automatically verified by the total variation distance. 
   \begin{definition}[{\cite[Definition~4.6]{HMattSch11}}]
   \label{def:contracting}
        Let $\cP$ be a Markov operator on $\cH$ with associated transition function $P(\cdot, \cdot)$. Then a semimetric $d: \cH \times \cH \to [0,1]$ is called \emph{contracting} for $\cP$ if there exists $\alpha < 1$ such that for every pair $x,y \in \cH$ with $d(x,y)< 1$ 
        \begin{equation*}
            W_d(P(x, \cdot), P(y, \cdot)) \leq \alpha\, d(x, y).
        \end{equation*}
   \end{definition}
    Going through the proof of Theorem~4.8 in \cite{HMattSch11}, it is clear that first and foremost the following intermediate result is shown:
    \begin{theorem}[Generalised Harris' theorem, {\cite[Theorem~4.8]{HMattSch11}}]
    \label{thm:general_harris}
        Let $\cP_t$, $t\geq 0$, be a Markov semigroup over $\cH$ admitting a continuous Lyapunov function $V$. Suppose that there exists $T>0$ and a semimetric $d: \cH \times \cH \to [0,1]$ which is contracting for $\cP_{T}$, and such that the level set $\lbrace x\in \cH \, : \, V(x) \leq 4K_V \rbrace$ is $d$-small for $\cP_{T}$. Then the following holds:
        \begin{enumerate}
            \item Defining $\td(x,y)^2 = d(x,y)(1 + V(x) + V(y))$, there exists $t_*>0$ and $\rho < 1$ such that
        \begin{equation}
        \label{eq:ch4general_harris}
            \Wtd{P_{t_*}(x, \cdot)}{P_{t_*}(y, \cdot)} \leq \rho\, \td(x,y).
        \end{equation}
        \item The semigroup $\lbrace \cP_t, \, t\geq 0 \rbrace$ has at most one invariant measure $\mu_*$. 
       \item If there exists a complete metric $d_0$ on $\cH$ such that $\cP_t$ is Feller in $(\cH, d_0)$ and $d_0 \leq \sqrt{d}$, then the semigroup $\lbrace \cP_t, \, t\geq 0 \rbrace$ has an invariant measure.
        \end{enumerate}
    \end{theorem}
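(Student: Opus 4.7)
The plan is to establish the three conclusions in sequence, with the bulk of the effort on the contraction estimate~\eqref{eq:ch4general_harris}. The central quantity to control is, for any coupling $\Gamma\in\cC(P_{t_*}(x,\cdot),P_{t_*}(y,\cdot))$, the integral $\int \td(u,v)\,\Gamma(du,dv)$. Writing $\Phi(u,v):=1+V(u)+V(v)$ and applying Cauchy--Schwarz to $\td=\sqrt{d\,\Phi}$ yields
\[
\int \td(u,v)\,\Gamma(du,dv)\le\left(\int d(u,v)\,\Gamma\right)^{1/2}\left(\int \Phi(u,v)\,\Gamma\right)^{1/2},
\]
and the second factor is bounded uniformly in $\Gamma$ by the Lyapunov property:
\[
\int \Phi\,d\Gamma=1+\cP_{t_*}V(x)+\cP_{t_*}V(y)\le 1+2K_V+C_V e^{-\gamma t_*}\bigl(V(x)+V(y)\bigr).
\]
It therefore suffices, for each $(x,y)$, to exhibit a coupling whose first factor makes the Cauchy--Schwarz product strictly less than $\td(x,y)^2=d(x,y)\Phi(x,y)$.

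I would set $t_*=nT$ with $n$ to be chosen at the end, and split the analysis into cases. When $d(x,y)<1$, iterating the contracting property of $d$ produces a coupling with $\int d\,\Gamma\le\alpha^n d(x,y)$; combined with the elementary bound $\int\Phi\,d\Gamma\le\max(1+2K_V,\,C_Ve^{-\gamma t_*})\,\Phi(x,y)$, the Cauchy--Schwarz product is dominated by $\alpha^n(1+2K_V)\,\td(x,y)^2$, which is a strict fraction of $\td(x,y)^2$ for $n$ large. When $d(x,y)=1$ and $V(x)+V(y)>8K_V$ the trivial coupling already works: the ratio $\int\Phi\,d\Gamma/\Phi(x,y)$, being a decreasing function of $V(x)+V(y)$ once $C_Ve^{-\gamma t_*}<1+2K_V$, is bounded above by the value at $V(x)+V(y)=8K_V$, which is a constant $<1$ for $t_*$ large. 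The delicate case is $d(x,y)=1$ with $V(x),V(y)\le4K_V$, where both points lie in the $d$-small set of Definition~\ref{def:dsmallset}. Here one would apply the $d$-small coupling at the first step to obtain $\int d\,\Gamma\le1-\varepsilon$, and then compose with the contracting coupling along each conditional fibre $\{d(u,v)<1\}$ for the remaining $n-1$ iterations, producing a composite coupling with an $\int d$-integral of order $(1-\varepsilon)\alpha^{n-1}$; the Cauchy--Schwarz product then beats $\td(x,y)^2\ge1$ for $n$ large enough. This gluing of the $d$-small and contracting couplings across the iterated Markov kernel is the main obstacle: one must verify that the composite kernel is genuinely a coupling of $P_{t_*}(x,\cdot)$ and $P_{t_*}(y,\cdot)$, and that the $d$-bounds survive the conditioning on whether each intermediate distance equals $1$ or not.

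For Part~2, integrating the Lyapunov inequality against any invariant measure $\mu_i$ gives $\int V\,d\mu_i\le K_V/(1-C_Ve^{-\gamma t_*})<\infty$, so $\Wtd{\mu_1}{\mu_2}<\infty$; invariance combined with~\eqref{eq:ch4general_harris} then yields $\Wtd{\mu_1}{\mu_2}=\Wtd{\cP_{t_*}^*\mu_1}{\cP_{t_*}^*\mu_2}\le\rho\,\Wtd{\mu_1}{\mu_2}$, forcing $\Wtd{\mu_1}{\mu_2}=0$ and hence $\mu_1=\mu_2$ since $\td$ separates points. For Part~3, fix $x_0\in\cH$ with $V(x_0)<\infty$ and iterate~\eqref{eq:ch4general_harris} to show that $\{(\cP_{t_*}^*)^n\delta_{x_0}\}_{n\ge1}$ is Cauchy in $W_{\td}$. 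The hypothesis $d_0\le\sqrt d$ (combined with $\td\ge\sqrt d$) upgrades this to Cauchyness for the Wasserstein metric built from $d_0\wedge 1$, which is complete on the space of probability measures over the Polish space $(\cH,d_0)$; the limit is invariant for $\cP_{t_*}$ thanks to the Feller property, and averaging over one period via Krylov--Bogolyubov promotes it to an invariant measure of the full semigroup $\{\cP_t\}_{t\ge 0}$.
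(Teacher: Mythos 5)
The paper does not actually prove this statement: it is quoted verbatim as \cite[Theorem~4.8]{HMattSch11}, so your proposal has to be measured against the argument in that reference, which proceeds by a three--case analysis very much in the spirit of what you wrote. Your Cauchy--Schwarz reduction, your Part~2, and the outline of Part~3 are all sound. But Part~1 contains a genuine gap at its load-bearing step: the claim that ``iterating the contracting property of $d$ produces a coupling with $\int d\,\Gamma\le\alpha^n d(x,y)$.'' The contracting property (Definition~\ref{def:contracting}) only applies to pairs with $d(u,v)<1$, and the optimal coupling $\Gamma_1$ of $P_T(x,\cdot)$ and $P_T(y,\cdot)$ realizing $\int d\,d\Gamma_1\le\alpha d(x,y)$ may well charge pairs with $d(u,v)=1$ (only the \emph{average} is small). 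On that part of the support the best you can say at the next step is $W_d(P_T(u,\cdot),P_T(v,\cdot))\le 1=d(u,v)$, and the composite bound collapses back to $\alpha d(x,y)$ rather than $\alpha^2 d(x,y)$. You correctly flag this gluing issue as ``the main obstacle'' in your third case, but it already breaks your first case; and without the $\alpha^n$ decay, a single step only yields the factor $\alpha(1+2K_V)$, which need not be below $1$. A second, smaller defect is that your cases do not cover all pairs: the complement of ``$d(x,y)<1$'' and ``$d(x,y)=1$, $V(x)+V(y)>8K_V$'' contains pairs with, say, $V(x)=7K_V$, $V(y)=K_V$, which do not lie in the $d$-small set $\{V\le 4K_V\}$; the threshold in the Lyapunov case must be $V(x)+V(y)>4K_V$ so that its complement forces \emph{both} points into the small set.

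The way \cite{HMattSch11} (and \cite{YetAnother}) avoids both problems is the idea missing from your proposal: introduce a tunable weight and work with $\td_\beta(x,y)^2=d(x,y)\bigl(1+\beta V(x)+\beta V(y)\bigr)$. The three--case analysis then yields a \emph{one-step} bound $W_{\td_\beta}(P_T(x,\cdot),P_T(y,\cdot))\le\rho_0\,\td_\beta(x,y)$ valid for \emph{all} pairs $x,y$ simultaneously, with $\rho_0<1$ obtained by choosing $\beta$ small enough that $\alpha(1+2\beta K_V)<1$ in the contracting case and $T$ large enough that $C_Ve^{-\gamma T}$ is small in the Lyapunov case. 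Because this contraction holds for every pair, it \emph{can} be iterated legitimately (integrate the one-step bound against any coupling of $P_{(k-1)T}(x,\cdot)$ and $P_{(k-1)T}(y,\cdot)$), giving $\rho_0^n$ after $n$ steps; since $\sqrt{\beta}\,\td\le\td_\beta\le\td$ for $\beta\le 1$, taking $n$ with $\beta^{-1/2}\rho_0^n<1$ recovers the statement for the unweighted $\td$ with $t_*=nT$. I would encourage you to restructure Part~1 around this one-step, all-pairs contraction for a reweighted semimetric rather than attempting to iterate the $d$-contraction and the $d$-smallness separately.
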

    From this result two important corollaries will follow, namely the exponential convergence of transition probabilities to the invariant measure, \autoref{cor:ch4expconvergence}, and a spectral gap result, \autoref{cor:spectralgap}. 
    \begin{corollary}
    \label{cor:ch4expconvergence}
        Suppose the assumptions of \autoref{thm:general_harris} hold uniformly for $T$ belonging to an open interval of $\mathbbm{R}$. Then \autoref{thm:general_harris} implies that there exist $\gamma>0$ and a $t_0>0$  such that given $\mu, \nu \in \mathcal{M}_1$
        \begin{equation*}
            W_{\td}(\cP_{t} \mu, \cP_{t}\nu) \leq e^{-\gamma t}  W_{\td}(\mu, \nu)\fa x\in \cH, \,  t\geq t_0.
        \end{equation*}
        Furthermore, if $\mu_*$ is the invariant measure for $\cP_t$, there exists $C>0$ such that 
        \begin{equation*}
            W_{\td}(P_t(x, \cdot), \mu_*) \leq C(1 + V(x)) e^{-\gamma t} \fa x\in \cH,\, t\geq t_0. 
        \end{equation*}
    \end{corollary}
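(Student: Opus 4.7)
My plan is to bootstrap the single-time contraction from Theorem~\ref{thm:general_harris} in three stages: first lift it from Dirac measures to arbitrary probability measures, then iterate it to obtain decay at every integer multiple of the base time, and finally use the uniformity hypothesis to cover every sufficiently large $t$. The bound on $W_{\td}(P_t(x,\cdot),\mu_*)$ will then follow by applying the general statement with $\nu=\mu_*$ and invoking the integrability of $V$ under the invariant measure.

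To lift the inequality $\Wtd{P_{t_*}(x,\cdot)}{P_{t_*}(y,\cdot)} \leq \rho\,\td(x,y)$ to arbitrary $\mu,\nu \in \cM_1(\cH)$, I would take an optimal coupling $\Gamma \in \mathcal{C}(\mu,\nu)$ and (via a standard measurable-selection argument) optimal couplings $\Gamma_{x,y}$ of $P_{t_*}(x,\cdot)$ and $P_{t_*}(y,\cdot)$; the mixture $\Gamma'(du\,dv) := \int \Gamma_{x,y}(du\,dv)\,\Gamma(dx\,dy)$ is a coupling of $\cP_{t_*}^*\mu$ and $\cP_{t_*}^*\nu$, and yields
\begin{equation*}
\Wtd{\cP_{t_*}^*\mu}{\cP_{t_*}^*\nu} \leq \int \td(u,v)\,\Gamma'(du\,dv) \leq \rho \int \td(x,y)\,\Gamma(dx,dy) = \rho\,\Wtd{\mu}{\nu}.
\end{equation*}
Iterating this via the Markov property gives $\Wtd{\cP_{n t_*}^*\mu}{\cP_{n t_*}^*\nu} \leq \rho^n \Wtd{\mu}{\nu}$ for every $n \in \N$. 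Next, the uniformity hypothesis provides an open interval $(T_1,T_2)\subset(0,\infty)$ and a common $\bar\rho<1$ serving as the contraction constant at every $T \in (T_1,T_2)$. For any $t \geq t_0 := T_1 T_2/(T_2-T_1)$, the interval $[t/T_2,\,t/T_1]$ has length at least $1$ and therefore contains an integer $n$; setting $T := t/n \in (T_1,T_2)$ and applying the iterated bound at the base time $T$ produces
\begin{equation*}
\Wtd{\cP_t^*\mu}{\cP_t^*\nu} \leq \bar\rho^{\,n}\,\Wtd{\mu}{\nu} \leq e^{-\gamma t}\Wtd{\mu}{\nu},
\end{equation*}
with $\gamma := -T_2^{-1}\log \bar\rho > 0$, since $n \geq t/T_2$.

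For the second bound I would take $\mu = \delta_x$, $\nu = \mu_*$ and use invariance $\cP_t^*\mu_* = \mu_*$, so that $\Wtd{P_t(x,\cdot)}{\mu_*} \leq e^{-\gamma t}\Wtd{\delta_x}{\mu_*}$. The only coupling of $\delta_x$ with $\mu_*$ is the product, hence $\Wtd{\delta_x}{\mu_*} = \int \td(x,y)\,\mu_*(dy)$. Since $d \leq 1$ and $\sqrt{a+b}\leq \sqrt{a}+\sqrt{b}$,
\begin{equation*}
\td(x,y) \leq \sqrt{1 + V(x) + V(y)} \leq 1 + \sqrt{V(x)} + \sqrt{V(y)},
\end{equation*}
and $\sqrt{V(x)} \leq 1 + V(x)$. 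It then suffices to bound $\int\sqrt{V}\,d\mu_*$; by Jensen this reduces to $\int V\,d\mu_* < \infty$, which follows by integrating the Lyapunov inequality against $\mu_*$: writing $\gamma_V$ for the Lyapunov decay rate (to avoid clash with the symbol above), invariance gives $\int V\,d\mu_* = \int \cP_t V\,d\mu_* \leq C_V e^{-\gamma_V t}\int V\,d\mu_* + K_V$, and choosing $t$ large enough that $C_V e^{-\gamma_V t} < 1$ yields $\int V\,d\mu_* \leq K_V/(1 - C_V e^{-\gamma_V t}) < \infty$. Combining the estimates produces $\Wtd{\delta_x}{\mu_*} \leq C(1 + V(x))$ and hence the claimed inequality.

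The main obstacle is the continuous-time interpolation: the single-time contraction only yields decay along an arithmetic progression $\{n t_*\}$, so producing a uniform exponential rate valid for \emph{all} sufficiently large $t$ is precisely what the uniformity assumption on an open interval of base times buys us, through the decomposition $t = nT$ with $T$ allowed to vary. The remaining ingredients, namely the coupling-composition argument lifting the contraction to measures, and the Lyapunov-based integrability of $V$ under $\mu_*$, are standard but must be handled carefully to obtain the explicit dependence $C(1+V(x))$ rather than a weaker one.
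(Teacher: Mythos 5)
The paper itself states this corollary without proof (it is presented as a direct consequence of Theorem~\ref{thm:general_harris}, following \cite{HMattSch11}), so there is no argument of the authors' to compare against; your proof supplies the standard derivation and is essentially correct. The three ingredients — lifting the pointwise contraction to $\cM_1$ by disintegrating an optimal coupling, iterating along $t = nT$ with $T$ ranging over the open interval to interpolate to all large $t$, and bounding $\Wtd{\delta_x}{\mu_*}$ via $\int \td(x,y)\,\mu_*(dy)$ together with $d\le 1$ and the Lyapunov bound — are exactly what is needed, and the algebra ($n \ge t/T_2$, $\gamma = -T_2^{-1}\log\bar\rho$, $\sqrt{V}\le 1+V$) checks out.

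Two points deserve slightly more care. First, your finiteness argument for $\int V\,d\mu_*$ writes $\int V\,d\mu_* = \int \cP_t V\,d\mu_* \le C_V e^{-\gamma_V t}\int V\,d\mu_* + K_V$ and rearranges; this presupposes the quantity is finite, which is what you are trying to prove. You should either run the same computation with the truncations $V\wedge M$ and pass to the limit, or use that $P_t(x_0,\cdot)\to\mu_*$ in $W_{\td}$ together with the continuity (hence lower semicontinuity) of $V$ and $\sup_t \cP_t V(x_0) \le V(x_0) + K_V$ to conclude $\int V\,d\mu_* \le V(x_0)+K_V$ by a Fatou/portmanteau argument. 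Second, Theorem~\ref{thm:general_harris} produces a contraction at some time $t_*$ which in general depends on the input time $T$ (in the underlying construction $t_*$ is a fixed multiple of $T$), and its $\rho$ depends on $T$ as well; your interpolation needs the single-time contraction \eqref{eq:ch4general_harris} to hold with one uniform $\bar\rho$ for all times in an open interval. This is the intended reading of the corollary's uniformity hypothesis, but it is worth stating explicitly that uniformity of the assumptions over $T\in(T_1,T_2)$ yields a uniform $\bar\rho$ and that the resulting set of admissible contraction times $\{t_*(T)\}$ contains an open interval, which is what the decomposition $t=nT$ actually uses.
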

 Recall that $\mu_*$ being invariant for $\cP_t$ means that $\mu_*$ is an eigenvector of $\cP_t^*$ with eigenvalue~1. Then the next corollary implies that the remaining spectrum is contained in the disk of radius $e^{-\gamma t}$ around the origin, or we may say $\cP^*_t$ exhibits a spectral gap. 
    \begin{corollary}[Spectral gap]
    \label{cor:spectralgap}
        Suppose all conditions in \autoref{thm:general_harris} are met and let $\| \cdot \|_{\td}$ be the Lipschitz seminorm \eqref{eq: lipschitz metric} associated to the semimetric $\td$. Then there exists $\rho<1$ and $t_*> 0$ such that 
        \begin{equation*}
            \| \cP_{t_*} \varphi - \langle \varphi , \mu_*\rangle \|_{\td} \leq  \rho \| \varphi - \langle \varphi , \mu_*\rangle\|_{\td}
        \end{equation*}
        for all $\varphi:\cH\to \R$ such that $\| \varphi\|_{\td}< \infty$. 
    \end{corollary}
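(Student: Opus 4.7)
The plan is to reduce the spectral gap inequality to a contraction bound on $\|\cP_{t_*}\psi\|_{\td}$, which is then obtained from the Wasserstein contraction of Theorem~\ref{thm:general_harris} via the Kantorovich-type duality \eqref{eq:kantorovich}.

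First I would exploit two elementary observations: the Lipschitz seminorm $\|\cdot\|_{\td}$ vanishes on constants, and the Markov semigroup preserves constants (i.e.\ $\cP_{t_*} c = c$ for every $c \in \R$). Setting $c := \langle \varphi, \mu_* \rangle$ and $\psi := \varphi - c$, invariance of $\mu_*$ yields $\cP_{t_*}\varphi - c = \cP_{t_*}\psi$, while the seminorm property gives $\|\psi\|_{\td} = \|\varphi\|_{\td}$ and $\|\cP_{t_*}\varphi - c\|_{\td} = \|\cP_{t_*}\psi\|_{\td}$. The statement of the corollary is therefore equivalent to the contraction
\[
\|\cP_{t_*} \psi\|_{\td} \leq \rho\, \|\psi\|_{\td}
\]
for every measurable $\psi$ with $\|\psi\|_{\td} < \infty$.

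For the key step, fix $x, y \in \cH$ with $x \neq y$ and write $\cP_{t_*}\psi(x) - \cP_{t_*}\psi(y) = \langle \psi, P_{t_*}(x, \cdot) \rangle - \langle \psi, P_{t_*}(y, \cdot) \rangle$. The duality bound~\eqref{eq:kantorovich} applied to the semimetric $\td$ gives
\[
|\cP_{t_*}\psi(x) - \cP_{t_*}\psi(y)| \leq \|\psi\|_{\td}\, \Wtd{P_{t_*}(x, \cdot)}{P_{t_*}(y, \cdot)},
\]
and part~1 of Theorem~\ref{thm:general_harris} bounds the Wasserstein term above by $\rho\, \td(x, y)$. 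Dividing by $\td(x, y)$ and taking the supremum over distinct $x, y$ yields the required contraction with the same $\rho$ and $t_*$.

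There is no serious technical obstacle; the corollary is essentially the function-space dual of the Wasserstein contraction from Theorem~\ref{thm:general_harris}. The one point deserving a brief check is that $\langle \varphi, \mu_* \rangle$ is finite when $\|\varphi\|_{\td} < \infty$: from the definition $\td(x,y)^2 = d(x,y)(1 + V(x) + V(y)) \leq 1 + V(x) + V(y)$ one has $|\varphi(x) - \varphi(x_0)| \leq \|\varphi\|_{\td}\sqrt{1 + V(x) + V(x_0)}$ for any fixed reference point $x_0 \in \cH$, and the Lyapunov inequality together with invariance of $\mu_*$ ensures $\int V \, d\mu_* < \infty$, so $\varphi \in L^1(\mu_*)$ and the centering $\varphi \mapsto \varphi - \langle \varphi, \mu_* \rangle$ is meaningful.
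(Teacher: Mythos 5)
Your argument is correct and is precisely the standard duality proof the paper intends (the corollary is stated without an explicit proof, but the inequality \eqref{eq:kantorovich} together with part~1 of Theorem~\ref{thm:general_harris} is exactly the mechanism): centre $\varphi$, use that $\cP_{t_*}$ fixes constants, bound $|\cP_{t_*}\psi(x)-\cP_{t_*}\psi(y)|$ by $\|\psi\|_{\td}\,\Wtd{P_{t_*}(x,\cdot)}{P_{t_*}(y,\cdot)}\leq \rho\,\|\psi\|_{\td}\,\td(x,y)$, and take the supremum. Your closing integrability check is also the right one to make; the only (very minor) addition would be to note that $\cP_{t_*}V(x)<\infty$ by the Lyapunov bound, so $\langle\psi,P_{t_*}(x,\cdot)\rangle$ is finite as well.
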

Given the formulation of $\td$, for observables such that $\| \varphi\|_{\td}< \infty$ we have
\begin{equation*}
   |\varphi(x) - \varphi(y)| \leq  \| \varphi\|_{\td} \sqrt{d(x,y)(1 + V(x)+ V(y))} \fa x\neq y.
\end{equation*}
This implies that the spectral gap holds for functions that are locally H\"{o}lder with exponent $\frac{1}{2}$ with respect to the semimetric $d$.
Yet the precise regularity of those observables with respect to the original norm $|\cdot|$ on $\cH$ depends on the formulation of the contracting semimetric $d$.
Recently \cite{butkovsky2020} provided a set of conditions to construct a semimetric satisfying the hypothesis in \autoref{thm:general_harris}. 
Given this choice of the semimetric, it turns out that the spectral gap holds for functions that are locally H\"{o}lder with respect to the original norm $|\cdot|$ and some exponent $\alpha < \frac{1}{2}$. 
In the next subsection we propose a modification of those conditions in \cite{butkovsky2020} to facilitate their application to models like the stochastic 2LQG.
%
%
\subsection{Framework for SPDEs}
\label{sec:exp_stab_general}
The following setup includes the dissipative SPDE's we are interested in but potentially other interesting dynamic models in infinite dimensions.
 Let $(\cH, |\cdot|)$ and $(\cV, \| \cdot \|)$ be Hilbert spaces with $\cV \subset \subset \cH$  (i.e.\ $\cV$ is compactly contained in $\cH$ or the unit sphere of $\cV$ is relatively compact in $\cH$).
 Further, $\|v\| \geq |v|$, and $\cV$ is assumed dense in $\cH$.
 This implies that $\cH = \cH' \subset \subset \cV'$, that $|v| \geq \|v\|'$, and that $\cH$ is dense in $\cV'$.
 Consider the stochastic equation 
  \begin{equation}
  \label{eq:ch4generalSDE}
      dX = \left( AX + F(X) \right) \, dt + dW_X, \quad X(0) = x
  \end{equation}
  where $A:\cV \to \cV'$ is a nonnegative linear operator, $W_X$ is a Wiener process on $(\Omega, \mathcal{F}, \bP)$ with values in $\cH$ and trace class covariance operator $Q:\cH \to \cH$, and $F:\cV \to \cV'$ is a nonlinear continuous function such that Equation~\eqref{eq:ch4generalSDE} holds in $\cV'$, and that there exists a unique solution for any initial condition $X(0) = x\in \cH$. As for the two--layer quasi--geostrophic equations, we assume the solution to be in $C([ 0,T]; \cH)\cap L^2(0,T; \cV)$ for all $T >0$ and a.a. $\omega\in \Omega$, and to be continuous with respect to the initial condition (as a function into $C([ 0,T]; \cH)$). 
  We define the Markov semigroup $\cP_t$ on $\cB_b(\cH)$ by
  \( \cP_t \varphi (x) = \E \, \varphi (X(t; x))\)
  and denote the associated transition probabilities as $P_t(x, \cdot)$. Given the regularity of the solutions the associated semigroup is Feller.

To ensure the generalised Harris' theorem applies we need to find a semimetric $d$ which is contracting with respect to $\cP_t$ for some $t$ and show that there is a Lyapunov function $V$ with a $d$-small sublevel set.
By definition then we have to appropriately bound the distance $W_d(P_t(x, \cdot), P_t(y, \cdot))$, for $x,y$ such that $d(x,y)<1$ in order to get the contraction, and for $x,y$ in a sublevel set of $V$ in order to get the $d$--smallness.

Let $Y$ satisfy the stochastic equation
  \begin{equation}
  \label{eq:ch4generalSDEII}
      dY = \left( AY + F(Y) \right) \, dt + dW_Y, \quad Y(0) = y
  \end{equation}
with $A, F$ as in Equation~\eqref{eq:ch4generalSDE}, while $W_Y$ and $W_X$ have the same law but are not necessarily identical (note also the different initial condition $Y(0) = y \neq x$).
As a consequence $\Law Y(t) = P_t(y, \cdot)$, so any coupling between $W_Y$ and $W_X$ furnishes a coupling between $X(t)$ and $Y(t)$ (or equivalently between $P_t(x, \cdot)$ and $P_t(y, \cdot)$) implying that $\E(d(X(t), Y(t)))$ is an upper bound for $W_d(P_t(x, \cdot), P_t(y, \cdot))$.
Taking the discrete metric $d(x,y)= \mathbbm{1}(x\neq y)$ for instance, we find $1 - \bP(X(t) = Y(t))$ as an upper bound for the total variation metric.
The event $X(t) = Y(t)$ though implies, roughly speaking, that the process $X$ has forgotten its initial condition.
Finding a coupling however that guarantees this with nontrivial probability is difficult in our infinite--dimensional setting.
While memory of the initial condition will eventually get wiped out in those degrees of freedom directly affected by the noise, the required time might get very long if there are infinitely many such degrees of freedom.
Clearly, this memory might also survive in degrees of freedom where there is no noise at all.

The idea of {\em generalised coupling} 
is to consider an intermediate equation 
    \begin{equation}
    \label{eq:ch4equationYtilde}
        d\tY = \left( A\tY + F(\tY) + G(X, \tY)\right) \, dt + d W_X, \quad \tY(0) = y
    \end{equation}
    where $A, F$ and even $W_X$ are as in Equation~\eqref{eq:ch4generalSDE}, but 
 with initial condition $y$ as in Equation~\eqref{eq:ch4generalSDEII}. 
 Further, $G$ is a control function determined in such a way that on the one hand for a $t$ large enough we can estimate $\E(d(X(t), \tY(t)))$ even if $x \neq y$ (using an appropriate semimetric $d$).
 On the other hand $G$ has to be chosen so that, under appropriate conditions, the law of $\tY$ will be absolutely continuous with respect to the law of $Y$ and so that the total--variation metric between the laws of $\tY$ and $Y$ can be controlled (as in \autoref{lemma:dTVBM}).
 We will achieve this by choosing $G$ such that the law of the process 
\begin{equation}\label{eq:ch4tildeWn}
    \tW_X(t) = \int_0^t G(X, \tY) \, ds + W_X(t)
\end{equation}
 is absolutely continuous with respect to the law of $W_Y$.
Let us now collect the assumptions we will be using and which are particularly suitable for applications to dissipative SPDEs. 
    %
    \paragraph{Assumption~A.}\label{asuA}
     Let $\cH_n$ be an $n$-dimensional subspace of $\cH$ and denote by $\Pi_n$ the orthogonal projection on $\cH_n$.
    The covariance operator $Q$ commutes with $\Pi_n$, and furthermore $Q_n := \Pi_n Q$ is invertible on $\cH_n$.
     Given a solution $t \to X(t)$ of \eqref{eq:ch4generalSDE}, there exists a finite dimensional measurable control $G: \cH\times \cH \to \cH_n$ such that the controlled equation \eqref{eq:ch4equationYtilde} has a unique solution $\tY$ in the sense described at the beginning of this subsection. 
    %
    In addition, we require the following:
    \begin{itemize}
        \item[{\crtcrossreflabel{\textbf{A1}}[ref:A1]}] There exist $\kappa_0>0$ and $\kappa_1\geq 0$ such that for all $t\geq 0$
        \begin{equation*}
            |X(t) - \tY(t)|^2\leq |x - y|^2 \exp(- \kappa_0 t + \kappa_1 \int_0^t \| X(s) \|^2 \, ds). 
        \end{equation*}
        \item[{\crtcrossreflabel{\textbf{A2}}[ref:A2]}] There exists $\kappa_2> 0$, $\kappa_3\geq 0$ and a random variable $\Xi_\gamma$ such that 
        \begin{equation}
        \label{eq:A2}
            |X(t)|^2 + \kappa_2\int_0^t  \| X(s) \|^2 \, ds \leq  |x|^2 + \kappa_3 t + \Xi_\gamma \quad t\geq 0
        \end{equation}
        with $\kappa_0 > \kappa_1 \kappa_3/ \kappa_2 $ and 
        \begin{equation}
        \label{eq:A2 martingale estimate}
            \bP(\Xi_\gamma \geq R) \leq e^{-2\gamma R}, \quad R \geq 0. 
        \end{equation}
        \item[{\crtcrossreflabel{\textbf{A3}}[ref:A3]}] There exists a positive constant $c > 0$ such that for each $t\geq 0$ and $s\in [0,t]$
        \begin{equation*}
            |G(X(s), \tY(s))|^2 \leq c | X(s) - \tY(s) |^2.
        \end{equation*}
        \item[{\crtcrossreflabel{\textbf{A4}}[ref:A4]}] There exists a measurable function $V: \cH \to \R_+$ such that for some $\gamma_1>0$, $K>0$
        \begin{equation}
        \label{eq:ch4Lyapunovfnct2}
            \E V(X(t)) \leq \E V(X(s)) + \int_s^t \left( -\gamma_1 \E V(X(\tau)) + K \right)\; d\tau , \quad t\geq s\geq 0.
        \end{equation}
        Furthermore for any $M>0$ the function $x\mapsto |x|^2$ is bounded on the level sets $\lbrace V \leq M \rbrace$.
    \end{itemize}
%
Given $\kappa_1, \kappa_2$ from \ref{ref:A1} and \ref{ref:A2} respectively define the premetric $\theta_\alpha$ depending on a positive parameter $\alpha$ as
\begin{equation}
\label{eq:ch4 theta(u,v)}
    \theta_\alpha(x,y) := |x - y|^{2\alpha} e^{\alpha \upsilon |x|^2} \quad \text{with } \upsilon : = \frac{\kappa_1}{\kappa_2}
\end{equation}
    and, given $N\in \N$, define a semimetric $d_N$ as 
\begin{equation}
    \label{eq:ch4defdN}
    d_N (x,y) := N \theta_\alpha(x,y) \wedge N\theta_\alpha(y,x) \wedge 1 .
\end{equation}
 We will see in \autoref{thm:exponentialergodicity} that, thanks to~\nameref{asuA}, there exists $\alpha_0$ such that for all $\alpha\in (0, \alpha_0)$ the conditions of \autoref{thm:general_harris} are satisfied with respect to the associated semimetric $d_N$ for large enough $N$.  
First though we use \nameref{asuA} to demonstrate an important bound on the control term $G$ in Equation~\eqref{eq:ch4equationYtilde}. This will be exploited in~\autoref{lemma:dTVBM} which we take from~\cite{butkovsky2020}.
\begin{lemma}
    \label{prop:ch4generalgirsanov}
        Let the processes $X$ and $\tY$ be the solutions of Equation~\eqref{eq:ch4generalSDE} resp.\ of the controlled equation \eqref{eq:ch4equationYtilde}.
        If Assumptions \ref{ref:A1}-\ref{ref:A3} hold, then for all $t\geq 0$
        \begin{equation}\label{eq:ch4boundMdelta}
             \int_0^t \!  | Q_n^{-1/2} G(X(s),\tY(s))|^2 \, ds 
             \leq \tfrac{c\| Q_n^{-1/2} \|^2}{\chi} |x - y|^2 e^{\tfrac{\kappa_1}{\kappa_2}\left( |x|^2 + \Xi_\gamma \right)}\left( 1 - e^{-\chi t}\right)
         \end{equation}
        for some $\chi>0$.
    \end{lemma}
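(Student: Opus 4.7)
The plan is to chain the three assumptions \ref{ref:A1}, \ref{ref:A2}, \ref{ref:A3} in a direct pointwise fashion, integrate the resulting exponential, and choose $\chi$ so that the cancellation condition $\kappa_0 > \kappa_1\kappa_3/\kappa_2$ yields a genuine contraction rate.

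First I would note that, because $G$ takes values in the $n$-dimensional space $\cH_n$ and $Q$ commutes with $\Pi_n$, the operator $Q_n^{-1/2}$ is well-defined on the range of $G$ and one has the trivial estimate
\begin{equation*}
    |Q_n^{-1/2} G(X(s),\tY(s))|^2 \leq \|Q_n^{-1/2}\|^2 \, |G(X(s),\tY(s))|^2.
\end{equation*}
Assumption \ref{ref:A3} then replaces the right-hand side by $c\,|X(s) - \tY(s)|^2$, reducing the lemma to a pointwise-in-$s$ estimate on the squared distance of the two processes.

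Next I would insert the bound from \ref{ref:A1} and immediately use \ref{ref:A2} to control the time integral of $\|X\|^2$ appearing in the exponent. Concretely, \ref{ref:A2} yields $\int_0^s \|X(\tau)\|^2\, d\tau \leq \kappa_2^{-1}(|x|^2 + \kappa_3 s + \Xi_\gamma)$, so combining with \ref{ref:A1} gives
\begin{equation*}
    |X(s) - \tY(s)|^2 \leq |x - y|^2 \exp\!\Bigl(\tfrac{\kappa_1}{\kappa_2}(|x|^2 + \Xi_\gamma)\Bigr)\,\exp(-\chi s),
\end{equation*}
where $\chi := \kappa_0 - \kappa_1 \kappa_3 / \kappa_2$ is strictly positive precisely because of the balance condition imposed in \ref{ref:A2}. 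This is the key structural point: $\chi > 0$ is exactly what provides the decay needed to make the subsequent time integral finite (and in fact bounded uniformly in $t$).

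Finally I would integrate this exponential in $s$ over $[0,t]$, which produces the factor $(1 - e^{-\chi t})/\chi$ and recovers the stated bound after multiplying through by $c\|Q_n^{-1/2}\|^2$. I do not foresee a genuine obstacle here; the only subtle point worth flagging is the use of $\|Q_n^{-1/2}\|$ rather than a full $\|Q^{-1/2}\|$ (which need not exist), which is legitimate because $G$ is valued in $\cH_n$ and $Q_n$ is invertible on that subspace by \nameref{asuA}.
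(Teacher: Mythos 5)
Your proposal is correct and follows essentially the same route as the paper: apply \ref{ref:A3} and \ref{ref:A1} pointwise, absorb the integral $\int_0^s \|X(\tau)\|^2\,d\tau$ via \ref{ref:A2} to produce the decay rate $\chi = \kappa_0 - \kappa_1\kappa_3/\kappa_2 > 0$, and integrate the resulting exponential in $s$. The only (immaterial) difference is the order in which you peel off the factor $\|Q_n^{-1/2}\|^2$; your remark on why $Q_n^{-1/2}$ rather than $Q^{-1/2}$ suffices is a sensible clarification that the paper leaves implicit.
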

    \begin{proof}
        By \ref{ref:A3} and \ref{ref:A1} we have that for any $t\geq 0$ and $s\in [0,t]$
        \begin{equation*}
            |G(X(s),\tY(s))|^2 \leq c |x - y|^2 \exp(- \kappa_0 s + \kappa_1 \int_0^s \| X(\tau) \|^2 \, d\tau),
        \end{equation*}
        and by \ref{ref:A2} 
        \begin{equation*}
            |G(X(s),\tY(s))|^2 \leq c |x - y|^2 \exp(- \left(\kappa_0 - \tfrac{\kappa_1 \kappa_3}{\kappa_2}\right) s + \tfrac{\kappa_1}{\kappa_2}\left( |x|^2 + \Xi_\gamma \right)).
        \end{equation*}
        It follows that for any $ t\geq 0$
        \begin{equation*}
        \begin{split}
             \int_0^t  | Q_n^{-1/2} G(X(s),\tY(s))|^2 \, ds  \leq \| Q_n^{-1/2} \|^2 \int_0^t  | G(X(s),Y(s))|^2 \, ds \\
             \leq \tfrac{c\| Q_n^{-1/2} \|^2}{\chi} |x - y|^2 \exp(\tfrac{\kappa_1}{\kappa_2}\left( |x|^2 + \Xi_\gamma \right))\left( 1 - e^{-\chi t }\right)
            \end{split}
        \end{equation*}
        where $\chi = \kappa_0 - \tfrac{\kappa_1 \kappa_3}{\kappa_2}>0$. 
        \end{proof}
Given $n$ and $Q_n$ as in \nameref{asuA}, set $W_n:=\Pi_n W_X$, which is a Wiener process with covariance matrix $Q_n$ and independent of $(1 - \Pi_n)W_X$. A consequence of~\autoref{prop:ch4generalgirsanov} is that the process 
    \begin{equation*}
        \tW_n(t) := W_n(t) + \int_0^t G(X(s), Y(s)) \, ds
    \end{equation*}
    is absolutely continuous with respect to $W_n$, by Girsanov's theorem. Then $\tW_X$ in Equation~\eqref{eq:ch4tildeWn}, i.e.~$\tW_X=\tW_n + (1 - \Pi_n)W_X$ is a $Q$-Wiener process absolutely continuous with respect to $W_X$, hence also with respect to $W_Y$ as $\Law W_X = \Law W_Y$. 

We are ready to state the main theorem of this section.
    \begin{theorem}
        \label{thm:exponentialergodicity}
        Consider $X(t)$ solution of \eqref{eq:ch4generalSDE} with associated Markov semigroup $\cP_t$ which is Feller in $(\cH, |\cdot|)$ and \nameref{asuA} holds. Given $\kappa_1, \kappa_2$ and $\gamma$ as in \ref{ref:A1} and \ref{ref:A2}, set
        \begin{equation}
        \label{eq:QGupsilonalpha0}
         \upsilon = \frac{\kappa_1}{\kappa_2} \quand \alpha_0 =  \frac{1}{2}\wedge \frac{2\gamma}{\upsilon + 2\gamma}.
        \end{equation} 
        Then there exists a unique invariant measure $\mu_*$, and there exists $t>0$ and $\rho\in (0,1)$ such that 
        \begin{equation}
        \label{eq:SPDEsHarris}
            W_{\td}(P_t(x, \cdot), P_t(y,\cdot)) \leq \rho \,\td (x,y).
        \end{equation}
        where $\td(x,y)^2 = d_N(x,y)\left( 1 + V(x) + V(y) \right)$ and $d_N$ is defined as in \eqref{eq:ch4defdN} for $\upsilon$ and $\alpha_0$ as above.
%
        %
    \end{theorem}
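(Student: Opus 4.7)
The plan is to verify the hypotheses of the generalised Harris theorem (Theorem~\ref{thm:general_harris}) with the semimetric $d = d_N$ for $\alpha\in(0,\alpha_0)$ and $N$ large enough. The Lyapunov condition on $V$ follows from \ref{ref:A4}: Gronwall applied to \eqref{eq:ch4Lyapunovfnct2} yields $\cP_t V(x)\leq e^{-\gamma_1 t}V(x) + K/\gamma_1$, so $V$ serves as Lyapunov function with $K_V=K/\gamma_1$; moreover the last statement in \ref{ref:A4} guarantees that $\{V\leq 4K_V\}$ is bounded in $|\cdot|$, which is the key input that makes the coupling estimates uniform on this sublevel set. What remains is to establish (i) $d_N$-contraction and (ii) $d_N$-smallness of $\{V\leq 4K_V\}$ for a sufficiently large fixed time $T$.

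For the core bound I would couple the solution $X$ from $x$ with the controlled process $\tY$ from $y$ driven by the same noise via \eqref{eq:ch4equationYtilde}. Feeding \ref{ref:A2} into \ref{ref:A1} to eliminate $\int_0^t\|X\|^2\,ds$ gives the pointwise estimate
\begin{equation*}
|X(t)-\tY(t)|^2 \leq |x-y|^2 \exp\!\bigl(-\chi t + \upsilon|x|^2 + \upsilon\,\Xi_\gamma - \upsilon|X(t)|^2\bigr),
\end{equation*}
with $\chi := \kappa_0 - \upsilon\kappa_3>0$. Raising to the power $\alpha$ and multiplying by $e^{\alpha\upsilon|X(t)|^2}$ cancels the last term and produces
\begin{equation*}
\theta_\alpha(X(t),\tY(t)) \leq \theta_\alpha(x,y)\,e^{-\alpha\chi t}\,e^{\alpha\upsilon\,\Xi_\gamma}.
\end{equation*}
The tail bound \eqref{eq:A2 martingale estimate} makes $\E e^{\alpha\upsilon\Xi_\gamma}$ finite precisely when $\alpha\upsilon<2\gamma$, which is the source of the constraint $\alpha<\alpha_0$; so $\E\theta_\alpha(X(t),\tY(t))\leq C\,\theta_\alpha(x,y)\,e^{-\alpha\chi t}$.

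Since $\tY$ does not have law $P_t(y,\cdot)$, I would convert $(X,\tY)$ into a genuine coupling of $P_t(x,\cdot)$ and $P_t(y,\cdot)$ via the Girsanov/maximal-coupling recipe alluded to in the text. Lemma~\ref{prop:ch4generalgirsanov} bounds the Radon--Nikodym exponent $\int_0^t |Q_n^{-1/2}G|^2\,ds$ by a constant times $|x-y|^2 e^{\upsilon(|x|^2+\Xi_\gamma)}$, so a standard Pinsker/TV estimate (Lemma~\ref{lemma:dTVBM} applied to $\tW_n$ versus $W_n$) produces a process $Y$ with $\Law Y(t)=P_t(y,\cdot)$ satisfying
\begin{equation*}
\bP(\tY(t)\neq Y(t)) \leq \tfrac12\sqrt{c_1\,|x-y|^2\,e^{\upsilon|x|^2}\,\E e^{\upsilon\Xi_\gamma}}
\end{equation*}
for some explicit $c_1$. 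Combining this with the previous paragraph and using $d_N\leq 1$ yields
\begin{equation*}
W_{d_N}\!\bigl(P_t(x,\cdot),P_t(y,\cdot)\bigr)\;\leq\;N\,C\,\theta_\alpha(x,y)\,e^{-\alpha\chi t}+\bP(\tY(t)\neq Y(t)).
\end{equation*}

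It remains to tune $N$ and $t=:T$ so both Harris hypotheses hold. On $\{d_N(x,y)<1\}$ we have $N\theta_\alpha(x,y)<1$ (hence $|x-y|^2 e^{\upsilon|x|^2}<N^{-1/\alpha}$), and I would choose $N$ large so the Girsanov term becomes $\leq \tfrac12 d_N(x,y)$ and then $T$ so that $NCe^{-\alpha\chi T}\leq \tfrac14$, giving $d_N$-contraction with rate $\rho<1$. For $d_N$-smallness on $\{V\leq 4K_V\}$, boundedness in $|\cdot|$ on this set (from \ref{ref:A4}) gives a uniform bound on both $\theta_\alpha(x,y)$ and on the Girsanov exponent, so increasing $T$ further pushes $W_{d_N}<1-\varepsilon$ uniformly. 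Feeding this into Theorem~\ref{thm:general_harris} yields \eqref{eq:SPDEsHarris} and uniqueness of $\mu_*$; existence follows from part~3 of that theorem using the complete metric $d_0(x,y)=|x-y|^\alpha\wedge 1$ (valid since $\alpha<\tfrac12$), which satisfies $d_0\leq \sqrt{d_N}$ up to a multiplicative constant, together with the assumed Feller property on $(\cH,|\cdot|)$.

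The main obstacle I anticipate is the simultaneous balancing: one must pick $\alpha$ strictly below $2\gamma/\upsilon$ so that $\E e^{\alpha\upsilon\Xi_\gamma}<\infty$, then pick $N$ large enough that the Girsanov/TV term dominates the desired fraction of $d_N(x,y)$ on the ``close'' regime, and then $T$ large enough that the $e^{-\alpha\chi T}$ factor beats the $N$ we just chose---all while keeping the same $N,T$ valid on the whole Lyapunov sublevel set. The specific form $\alpha_0=\tfrac12\wedge 2\gamma/(\upsilon+2\gamma)$ will presumably drop out of optimising a Young-type inequality hidden inside the Girsanov/TV bound, and pinning this down is the most delicate computation.
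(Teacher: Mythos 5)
Your overall strategy is exactly the paper's: verify the hypotheses of Theorem~\ref{thm:general_harris} for $d_N$, obtaining the Lyapunov property from \ref{ref:A4} via the comparison lemma, the bound $W_{d_N}(P_t(x,\cdot),P_t(y,\cdot))\leq d_{TV}(\Law(W_n)_{s\le t},\Law(\tW_n)_{s\le t})+NC_\Xi\theta_\alpha(x,y)e^{-\chi\alpha t}$ from the generalised coupling together with \ref{ref:A1}--\ref{ref:A3} and Lemma~\ref{prop:ch4generalgirsanov}, and then tuning $N$ and $T$. However, two steps as you have written them would not go through.

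First, your total--variation estimate $\bP(\tY(t)\neq Y(t))\leq\tfrac12\sqrt{c_1|x-y|^2e^{\upsilon|x|^2}\,\E e^{\upsilon\Xi_\gamma}}$ is Pinsker's inequality with the full first moment of the Girsanov exponent, and $\E e^{\upsilon\Xi_\gamma}$ is finite only when $\upsilon<2\gamma$, which is not assumed. This is precisely why Lemma~\ref{lemma:dTVBM} is stated in terms of the fractional moment $M_\delta$ with $\delta\in(0,1)$: the paper takes $\delta<(2\gamma/\upsilon)\wedge 1$ so that $\E e^{\upsilon\delta\Xi_\gamma}<\infty$ by \eqref{eq:A2 martingale estimate}, and the bound \eqref{eq:dTVBM1} then yields $d_{TV}\leq C_\alpha\theta_\alpha(x,y)$ with $\alpha=\delta/(1+\delta)$. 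This substitution is not a hidden optimisation to be pinned down later; it is where the threshold $\alpha_0=\tfrac12\wedge\tfrac{2\gamma}{\upsilon+2\gamma}$ comes from, and without it the argument only covers the regime $\upsilon<2\gamma$.

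Second, for the $d_N$-smallness of $\{V\leq 4K_V\}$ you propose to use the uniform bound on the Girsanov exponent and then ``increase $T$ further''. But the total--variation term does not decay in $T$ (the bound in Lemma~\ref{prop:ch4generalgirsanov} is increasing in $t$), so if the Pinsker-type estimate returns a number $\geq 1$ on the sublevel set, no choice of $T$ repairs it. What is needed is the second, non-perturbative estimate \eqref{eq:dTVBM2} of Lemma~\ref{lemma:dTVBM}, which gives $d_{TV}\leq 1-\tfrac16\min\bigl(\tfrac18,\exp(-(2^{2-\delta}M_\delta)^{1/\delta})\bigr)\leq 1-\varepsilon_1$ whenever $M_\delta$ is uniformly bounded; only then does taking $T$ large (to kill the $N C_K C_\Xi e^{-\chi\alpha T}$ remainder) produce $W_{d_N}\leq 1-\varepsilon$. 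The rest of your outline, including the choice $d_0(x,y)=|x-y|\wedge 1$ for existence and the order in which $\alpha$, $N$ and $T$ are fixed, matches the paper.
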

    In order to prove this result we introduce first the following three lemmas.
    The first one is the so-called comparison theorem, a classic generalisation of the integral Gronwall lemma:
    \begin{lemma}[Comparison Theorem]
    \label{lemma:comparison_thm}
     Let $f$ be a continuous function, for which
        \begin{equation}
        \label{eq:A_proofeta}
            f(s) - f(r) \leq -\gamma \int_r^s f(\tau) \, d\tau + K(s -r) \fa 0< r< s
        \end{equation}
        with $\gamma, K >0$ then 
        \begin{equation}
            f(t) \leq f(0)e^{-\gamma t} + K/\gamma \fa t\geq 0.
        \end{equation}
    \end{lemma}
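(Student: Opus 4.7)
The plan is to reduce to a homogeneous Gronwall-type inequality by a constant shift, then convert the resulting integral inequality into a differential inequality for the antiderivative and integrate via the usual exponential multiplier.

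First I would set $g(t) := f(t) - K/\gamma$. Substituting $f(\tau) = g(\tau) + K/\gamma$ into the hypothesis and noting that $\gamma \int_r^s (K/\gamma)\,d\tau = K(s-r)$, the $K(s-r)$ terms cancel exactly, leaving
\begin{equation*}
 g(s) - g(r) \leq -\gamma \int_r^s g(\tau)\,d\tau \quad \text{for all } 0 < r < s.
\end{equation*}
Since $f$ is continuous, so is $g$, and letting $r \downarrow 0$ this inequality extends to $r=0$, so the target estimate $f(t)\leq f(0) e^{-\gamma t} + K/\gamma$ is equivalent to $g(t)\leq g(0) e^{-\gamma t}$.

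Next I would introduce $G(t) := \int_0^t g(\tau)\,d\tau$, which by continuity of $g$ is $C^1$ with $G'(t)=g(t)$. Taking $r=0$ in the displayed inequality gives $g(t) \leq g(0) - \gamma G(t)$, i.e.
\begin{equation*}
 G'(t) + \gamma G(t) \leq g(0).
\end{equation*}
Multiplying by the integrating factor $e^{\gamma t}$ turns the left-hand side into $\tfrac{d}{dt}\bigl(e^{\gamma t} G(t)\bigr)$, and integrating on $[0,t]$ (using $G(0)=0$) yields $G(t) \leq (g(0)/\gamma)(1-e^{-\gamma t})$. Substituting this back into $g(t)\leq g(0) - \gamma G(t)$ gives $g(t) \leq g(0) e^{-\gamma t}$, hence $f(t)\leq f(0) e^{-\gamma t} + K/\gamma$ as claimed.

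There is no real obstacle: the only subtlety is that the hypothesis is formulated for strict inequalities $0 < r < s$, which must be extended to $r=0$ using continuity of $f$ before the integrating-factor step. If one prefers to avoid even this mild limiting argument, one can instead fix $r_0 > 0$, run the same argument on $[r_0,t]$ to obtain $g(t) \leq g(r_0) e^{-\gamma(t-r_0)}$, and then let $r_0 \downarrow 0$ using continuity of $g$ at $0$.
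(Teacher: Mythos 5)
The paper itself offers no proof of this lemma (it is invoked as a classical Gronwall-type comparison result), so your argument has to stand on its own, and unfortunately it contains a genuine gap in the final step. The reduction $g:=f-K/\gamma$ and the extension of the inequality to $r=0$ are both fine. The problem is the last substitution: from $G'(t)+\gamma G(t)\le g(0)$ you correctly derive the \emph{upper} bound $G(t)\le (g(0)/\gamma)(1-e^{-\gamma t})$, but inserting an upper bound for $G(t)$ into $g(t)\le g(0)-\gamma G(t)$ only bounds the right-hand side from \emph{below}: it gives $g(0)-\gamma G(t)\ge g(0)e^{-\gamma t}$, which combined with $g(t)\le g(0)-\gamma G(t)$ yields no upper bound on $g(t)$ at all. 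The inequality sign goes the wrong way.

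This cannot be patched within your scheme, because you only ever use the hypothesis with $r=0$, and that weaker hypothesis does not imply the conclusion. Take $\gamma=1$, $K=0$, $g(0)=0$: a continuous $g$ that first decreases to negative values, accumulating a negative integral, and then climbs to a strictly positive value $g(t_0)>0$ can satisfy $g(t)\le -\int_0^t g\,d\tau$ for all $t$ while violating $g(t_0)\le g(0)e^{-t_0}=0$. The hypothesis on every interval $[r,s]$ rules such behaviour out and must genuinely be used. A correct route: for $h>0$, continuity gives $g(r+h)-g(r)\le-\gamma\int_r^{r+h}g\,d\tau=-\gamma g(r)h+o(h)$, so the upper right Dini derivative satisfies $D^+g(r)\le-\gamma g(r)$; consequently $D^+\bigl(e^{\gamma r}g(r)\bigr)\le 0$, the continuous function $r\mapsto e^{\gamma r}g(r)$ is non-increasing, and $g(t)\le g(0)e^{-\gamma t}$ follows, which yields $f(t)\le f(0)e^{-\gamma t}+K/\gamma$ (in fact the slightly stronger bound $f(0)e^{-\gamma t}+(K/\gamma)(1-e^{-\gamma t})$).
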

%
  The next lemma gives upper bounds for the total variation distance between a finite dimensional Wiener process and the same process with added drift. 
     \begin{lemma}[{\cite[Theorem~A.5]{butkovsky2020}}]
        \label{lemma:dTVBM}
            Let $B$ be a n-dimensional Wiener process with covariance operator $\sigma$, $(h(t))_{t\geq 0}$ a progressively measurable n-dimensional process and define
            \begin{equation*}
                \tilde{B}(t) = \int_0^t h(s) \, ds + B(t)\quad t\geq 0.
            \end{equation*}
            Fix $t>0$, then, if for some $\delta\in (0,1)$
            \begin{equation}
            \label{eq:lemmaMdelta}
                M_\delta : = \E \left( \int_0^t | \sigma^{-1/2} h(s) |^2\, ds \right)^\delta < \infty
            \end{equation}
             the following bounds hold:
            \begin{align}
                d_{TV}(\Law(B(s))_{s\leq t}, \Law(\tB(s))_{s\leq t}) &\leq 2^{(1- \delta) /(1+ \delta)}M_\delta^{1/(1+ \delta)}; \label{eq:dTVBM1}\\
                d_{TV}(\Law(B(s))_{s\leq t}, \Law(\tB(s))_{s\leq t}) &\leq 1 - \frac{1}{6}\min\left(\frac{1}{8}, \exp(-(2^{2- \delta}M_{\delta })^{1/\delta})\right).\label{eq:dTVBM2}
            \end{align}
        \end{lemma}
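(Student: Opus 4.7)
The plan is to realise both laws through a Girsanov change of measure on a common probability space and to control the total variation by truncating the integrated ``energy'' $\int_0^t |\sigma^{-1/2} h|^2 du$, whose $\delta$-th moment is $M_\delta$. Let $W := \sigma^{-1/2} B$ be the standard $n$-dimensional Brownian motion underlying $B$, and for $R > 0$ introduce the stopping time $\tau_R := \inf\{s \geq 0 : \int_0^s |\sigma^{-1/2} h|^2 du \geq R\} \wedge t$ and the truncated drift $h^R := h \mathbbm{1}_{[0, \tau_R]}$. Since $\int_0^t |\sigma^{-1/2} h^R|^2 du \leq R$ almost surely, Novikov's condition applies to $h^R$ and the stochastic exponential
\[
Z^R(t) := \exp\Bigl(-\int_0^t (\sigma^{-1/2} h^R)^T dW - \tfrac{1}{2}\int_0^t |\sigma^{-1/2} h^R|^2 du\Bigr)
\]
defines an equivalent probability measure $d\mathbb{Q}^R := Z^R(t)\, d\bP$ on $\mathcal{F}_t$ under which $\tilde{B}^R := B + \int_0^\cdot h^R du$ has the same law as $B$ has under $\bP$. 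Moreover, $\tilde{B}^R$ and $\tilde{B}$ agree on $\{\tau_R = t\}$, so the synchronous coupling yields $d_{TV}(\Law \tilde{B}^R, \Law \tilde{B}) \leq \bP(\tau_R < t) \leq M_\delta / R^\delta$ by Markov's inequality.

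For bound \eqref{eq:dTVBM1}, the triangle inequality and the identification $d_{TV}(\Law B, \Law \tilde{B}^R) = d_{TV}(\bP|_{\mathcal{F}_t}, \mathbb{Q}^R) = \tfrac{1}{2}\E|1 - Z^R(t)|$ give
\[
d_{TV}(\Law B, \Law \tilde{B}) \leq \tfrac{1}{2}\E|1 - Z^R(t)| + M_\delta / R^\delta.
\]
The first term is estimated through the bounded quadratic variation $\langle \log Z^R \rangle_t \leq R$; to obtain the sharp exponent $1/(1+\delta)$ rather than the weaker $1/(1+2\delta)$ coming from Pinsker or Cauchy--Schwarz, one needs a Hölder-type interpolation between $\E|1 - Z^R|$ and a higher moment $\E (Z^R)^{1 + 1/\delta}$, the latter being controlled by the quadratic-variation bound. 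Minimising the resulting expression in $R$ balances the two contributions at $R$ of the order $M_\delta^{2/(1+\delta)}$ and produces the stated bound $2^{(1-\delta)/(1+\delta)} M_\delta^{1/(1+\delta)}$, with the explicit prefactor arising from careful bookkeeping of the constants at the optimum.

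For bound \eqref{eq:dTVBM2}, use the identity $1 - d_{TV}(\mu, \nu) = \E_\nu \min\bigl(1, \tfrac{d\mu}{d\nu}\bigr) \geq c \, \bP_\nu\bigl(\tfrac{d\mu}{d\nu} \geq c\bigr)$, valid for any $c \in (0,1)$. Taking $\mu = \Law B$ and $\nu = \Law \tilde{B}$, on the event $\{\tau_R = t\}$ the density $d\mu/d\nu$ coincides with $1/Z^R$, and therefore
\[
\bP_\nu(1/Z^R \geq c) \geq \bP(\tau_R = t) - \bP(-\log Z^R < \log c) \geq 1 - \tfrac{M_\delta}{R^\delta} - \bP(-\log Z^R < \log c),
\]
where the last probability is controlled by a Chebyshev estimate on the martingale part $\int_0^t (\sigma^{-1/2} h^R)^T dW$ whose variance is at most $R$. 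Choosing $c = \exp(-R/2)$ up to an absolute constant and $R = (2^{2-\delta} M_\delta)^{1/\delta}$ balances the truncation probability against the Chebyshev tail and yields the stated lower bound $\tfrac{1}{6}\min\bigl(\tfrac{1}{8}, \exp(-(2^{2-\delta} M_\delta)^{1/\delta})\bigr)$ on $1 - d_{TV}$, with the numerical constants $1/6$ and $1/8$ emerging directly from the Chebyshev computation.

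The main obstacle is the sharp exponent $1/(1+\delta)$ in \eqref{eq:dTVBM1}: naively applying Pinsker or Cauchy--Schwarz to $\E|1 - Z^R(t)|$ only yields $1/(1 + 2\delta)$, so the Hölder interpolation step is essential. A secondary technical point is the rigorous justification of Girsanov in the absence of a direct Novikov condition on $h$, which is circumvented here by the localisation through $\tau_R$.
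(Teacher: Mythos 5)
This lemma is imported verbatim from \cite[Theorem~A.5]{butkovsky2020}; the paper gives no proof of it, so there is no internal argument to compare against. Measured against the cited source, your skeleton is the right one and essentially the standard one: localise via the stopping time $\tau_R$, apply Girsanov to the truncated drift $h^R$ (for which Novikov holds), use the synchronous coupling on $\{\tau_R = t\}$ together with Markov's inequality to get $d_{TV}(\Law \tB^R, \Law \tB) \leq M_\delta/R^\delta$, and then optimise over $R$. Two small imprecisions: $d_{TV}(\Law B, \Law \tB^R) \leq d_{TV}(\bP|_{\mathcal{F}_t}, \mathbb{Q}^R)$ is an inequality (push-forwards contract total variation), not an equality; and in the second bound you identify $d\mu/d\nu$ with $1/Z^R$ ``on $\{\tau_R = t\}$'', which conflates a path-space Radon--Nikodym derivative with an $\omega$-wise quantity --- one must first pass from $\Law \tB$ to $\Law \tB^R$ and only then invoke Girsanov.

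The genuine gap is that both quantitative cores are asserted rather than proved. For \eqref{eq:dTVBM1} you correctly diagnose that Pinsker only yields the exponent $1/(1+2\delta)$ after optimising $M_\delta R^{-\delta} + \sqrt{R}/2$, but the proposed fix --- a ``H\"older-type interpolation between $\E|1-Z^R|$ and $\E (Z^R)^{1+1/\delta}$'' --- is not carried out, and it is not evident it closes: higher moments of the stochastic exponential are only controlled by bounds of the form $e^{cR}$, and the claimed constant $2^{(1-\delta)/(1+\delta)} M_\delta^{1/(1+\delta)}$ is exactly what comes from balancing $M_\delta R^{-\delta}$ against a term \emph{linear} in $R$ (namely $R/4$), which points to a different estimate on the Girsanov discrepancy than the one you sketch. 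For \eqref{eq:dTVBM2}, the constants $1/6$, $1/8$ and $2^{2-\delta}$ are said to ``emerge directly from the Chebyshev computation'' without that computation being done; a crude Chebyshev bound on the martingale part gives $4/R$, which is only useful for $R$ large and does not visibly produce the stated expression. Since these explicit constants are used downstream in the paper (they determine $\varepsilon_1$ in the $d_N$-smallness step of Theorem~\ref{thm:exponentialergodicity}), they cannot be left to ``careful bookkeeping''; as it stands the proposal is a correct roadmap with the two hardest estimates missing.
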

    The third lemma provides an estimate for the distance of two transition probabilities with different initial condition which will be the first block in building the proof of \autoref{thm:exponentialergodicity}. 
  To state the lemma, we set $n \in \N$ as in~\nameref{asuA} and recall the definition
    \begin{equation}\label{eq:defWn}
    W_n = \Pi_n W_X
    \qquad \text{and} \qquad
    \tW_n = \Pi_n \tW_X.
    \end{equation}
    \begin{lemma}
    \label{lemma:boundWdN}
        Let $X(t)$ and $Y(t)$ be the solutions of \eqref{eq:ch4generalSDE} and \eqref{eq:ch4generalSDEII} respectively and $P_t(x, \cdot), P_t(y, \cdot)$ their respective laws. If \nameref{asuA} holds, then there exists positive constants $C_{\Xi}$ and $\alpha_0$ such that for all $\alpha \in (0, \alpha_0)$ and $N\in \N$ we have
             \begin{multline}
             \label{eq:ch4lemma}
                W_{d_N}(P_t(x, \cdot), P_t(y, \cdot))
                \leq d_{TV}(\Law (W_n(t))_{s\leq t}, \Law (\tW_n(t))_{s\leq t})
                \\ + N C_\Xi \theta_\alpha(x,y) e^{- \chi\alpha t} ,
            \end{multline}
            for all $x,y\in \cH$.
    \end{lemma}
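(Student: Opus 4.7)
The plan is to build a coupling of $P_t(x, \cdot)$ and $P_t(y, \cdot)$ through the auxiliary controlled process $\tY$ of \eqref{eq:ch4equationYtilde}, since for any such coupling $\E d_N(X(t), Y(t))$ upper bounds $W_{d_N}(P_t(x,\cdot), P_t(y,\cdot))$. On an enlarged probability space I introduce a driving noise $W_Y$ for $Y$ with $\Law W_Y = \Law W_X$, maximally coupled to the drifted process $\tW_X$ of \eqref{eq:ch4tildeWn}. By \autoref{prop:ch4generalgirsanov} condition \eqref{eq:lemmaMdelta} of \autoref{lemma:dTVBM} is satisfied, so Girsanov's theorem guarantees $\Law \tW_X \ll \Law W_X = \Law W_Y$ on $[0,t]$ and a maximal coupling exists with $\bP(E) = 1 - d_{TV}(\Law \tW_X|_{[0,t]}, \Law W_X|_{[0,t]})$, where $E := \{W_Y \equiv \tW_X \text{ on } [0,t]\}$. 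Since $W_X$ and $\tW_X$ agree outside the $\Pi_n$--component, this total variation collapses to $d_{TV}(\Law W_n|_{[0,t]}, \Law \tW_n|_{[0,t]})$, and on $E$ pathwise uniqueness for \eqref{eq:ch4generalSDEII} yields $Y \equiv \tY$ on $[0,t]$.

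Exploiting that $d_N \leq 1$, I split
\begin{equation*}
d_N(X(t), Y(t)) \leq d_N(X(t), \tY(t))\, \mathbbm{1}_E + \mathbbm{1}_{E^c},
\end{equation*}
so taking expectation reduces the problem to bounding $\E d_N(X(t), \tY(t))$. Since $d_N \leq N \theta_\alpha$, it suffices to control $\E \theta_\alpha(X(t), \tY(t))$, and here the pathwise argument inside the proof of \autoref{prop:ch4generalgirsanov} adapts directly: raising \ref{ref:A1} to the power $\alpha$ and feeding in \ref{ref:A2} cancels the $\int_0^t \|X\|^2\,ds$ terms thanks to the tuned choice $\upsilon = \kappa_1/\kappa_2$, producing a deterministic rate $e^{-\alpha\chi t}$ with $\chi = \kappa_0 - \upsilon\kappa_3 > 0$ and a residual exponential factor $e^{\alpha\upsilon \Xi_\gamma}$, namely
\begin{equation*}
\theta_\alpha(X(t), \tY(t)) \leq \theta_\alpha(x,y)\, e^{-\alpha\chi t}\, e^{\alpha\upsilon \Xi_\gamma}.
\end{equation*}

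The last step is integrating the tail estimate \eqref{eq:A2 martingale estimate}, which furnishes $C_\Xi := \E e^{\alpha\upsilon \Xi_\gamma} < \infty$ precisely when $\alpha \upsilon < 2\gamma$; this is where the admissible range is dictated and where the threshold $\alpha_0 = \tfrac{1}{2} \wedge \tfrac{2\gamma}{\upsilon + 2\gamma}$ comes in, the $\tfrac{1}{2}$ cap being reserved for later use inside \autoref{thm:exponentialergodicity}. Adding the two contributions yields \eqref{eq:ch4lemma}. The main subtle point is the coupling construction itself: maximal coupling of path measures is standard in principle, but one must verify that the absolute continuity supplied by \autoref{prop:ch4generalgirsanov} applies on the full interval $[0,t]$ and transfers cleanly to the $\Pi_n$--projected noises via \autoref{lemma:dTVBM}, so that the pathwise and total-variation errors can genuinely be summed without additional coupling loss.
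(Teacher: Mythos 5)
Your proof is correct and follows essentially the same route as the paper: decompose through the controlled process $\tY$, reduce the total-variation term to the finite-dimensional noise projections $W_n,\tW_n$ via a (maximal) coupling extended by the common $(1-\Pi_n)W_X$ component, and bound $\E\,\theta_\alpha(X(t),\tY(t))$ by combining \ref{ref:A1} and \ref{ref:A2} with the tuned $\upsilon=\kappa_1/\kappa_2$ and integrating the tail bound \eqref{eq:A2 martingale estimate}. The only cosmetic difference is that you re-derive the splitting $W_{d_N}\leq d_{TV}(\cdot,\cdot)+\E\,d_N(X(t),\tY(t))$ by hand through the event $E$ and the bound $d_N\leq 1$, whereas the paper simply cites \cite[Theorem~2.4]{butkovsky2020} for this inequality \eqref{eq:jb2}.
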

    
    \begin{proof}
        As shown in \cite[Theorem~2.4]{butkovsky2020}, by means of the classic coupling lemma and the gluing lemma (see e.g.~\cite{Villani}) it can be shown that 
        \begin{equation}
        \label{eq:jb2}
        W_{d_N}(P_t(x, .), P_t(y, .))
            \leq \underbrace{ d_{TV}(P_t(y, \cdot), \Law \tY(t))}_{(I)} 
            + \underbrace{\E[d_N(X(t), \tY(t))]}_{(II)}.
        \end{equation}
       We look at the two terms separately starting from (I). From the theory of stochastic differential equations the solutions of \eqref{eq:ch4generalSDEII} and \eqref{eq:ch4equationYtilde} can be seen as image via a measurable function $\Phi^y$ of their driving noise, $W_Y$ and $\tW_X$, respectively i.e.~$ Y(t) = \Phi^y((W_Y(s))_{s\leq t})$ and $\tY = \Phi^y((\tW_X(s))_{s\leq t})$.
      Then, using a classical property of the total variation norm, we get
        \begin{equation*}
        d_{TV}(\Law Y(t), \Law \tY(t))
            \leq d_{TV}(\Law (W_{Y}(s))_{s\leq t},\Law (\tW_{X}(s))_{s\leq t}).
        \end{equation*}
        By the coupling lemma there exists a coupling 
        %
        $(\xi, \tilde{\xi})$ of the laws of the finite dimensional Wiener processes $(W_n(s))_{s\leq t}$ and $(\tW_n(s))_{s\leq t}$ such that 
        \begin{equation}
        \label{eq:Wncoupling}
            d_{TV}(\Law (W_n(s))_{s\leq t},\Law (\tW_n(s))_{s\leq t}) = \bP ( \xi \neq \tilde{\xi} ). 
        \end{equation}
        %
        Therefore $B := \xi + (1 - \Pi_n)W_X$ and $\tilde{B}= \tilde{\xi} + (1 - \Pi_n)W_X$ provide a coupling for $(W_Y, \tW_X)$.
        Hence
        \begin{equation}
        \label{eq:Wcoupling}
            d_{TV}(\Law (W_{Y}(s))_{s\leq t},\Law (\tW_{X}(s))_{s\leq t})
            \leq \bP( B \neq \tilde{B} ) .
        \end{equation}
        But since $B = \tilde{B} \Leftrightarrow \xi = \tilde{\xi}$ we can use Equation~\eqref{eq:Wncoupling} in  Equation~\eqref{eq:Wcoupling} and obtain
        %
        %
        \begin{equation*}
             d_{TV}(\Law (W_{Y}(s))_{s\leq t},\Law (\tW_{X}(s))_{s\leq t})
             \leq  d_{TV}(\Law(W_n(s))_{s\leq t},\Law(\tW_n(s))_{s\leq t})
        \end{equation*}
        giving the first part of \eqref{eq:ch4lemma}.
    
        Next we look at~(II) in \eqref{eq:jb2}. By definition of $d_N$ \eqref{eq:ch4defdN} we have 
        \begin{equation}
        \label{eq:g2}
            \E\,  d_N(X(t), \tY(t)) \leq N \E \, \theta_\alpha(X(t),\tY(y)) .
        \end{equation}
        Combining Assumption~\ref{ref:A1} and \ref{ref:A2} we have that 
        \begin{equation}
            \label{eq:ch4proofbutkA1+A2}
            |X(t)- \tY(t)|^2 \leq |x-y|^2 \exp(- \chi t + \upsilon (|x|^2 - |X(t)|^2 + \Xi_\gamma))
        \end{equation}
        namely, for all $\alpha>0$
        \begin{equation*}
            |X(t)- \tY(t)|^{2\alpha} e^{\alpha \upsilon |X(t)|^2}\leq |x-y|^{2\alpha} e^{\alpha \upsilon|x|^2} \exp(- \alpha\chi t + \alpha \upsilon \Xi_\gamma).
        \end{equation*}
        Therefore by the definition of the premetric $\theta_\alpha$ \eqref{eq:ch4 theta(u,v)}
        \begin{equation}
        \label{eq:ch4proofbutkcontractII}
             \E\, \theta_\alpha(X(t) , \tY(t)) \leq C_\Xi \theta_\alpha(x,y) \exp(- \chi\alpha t) 
        \end{equation}
        where by \eqref{eq:A2 martingale estimate}
        \begin{equation}
         \label{eq:ch4CXi}
          C_\Xi := \E \,  \exp(\upsilon \alpha \Xi_\gamma ) < \infty \quad \text{if } \upsilon \alpha < 2\gamma.
        \end{equation}
        
        Putting together these results in \eqref{eq:jb2} we have that \eqref{eq:ch4lemma} holds for all $\alpha \in (0, \alpha_0)$ setting $\alpha_0 := 2\gamma /\upsilon$.
        
    \end{proof}

    We are now ready to show \autoref{thm:exponentialergodicity}:
    
    \begin{proof}[Proof of \autoref{thm:exponentialergodicity}]
        
        %
%
First we show that $V$ is a Lyapunov function as in \eqref{def:LyapunovfctHMS}. By \ref{ref:A4} we know that there exists $\gamma_1, K$ strictly positive such that 
         \begin{equation}
        \label{eq:lyapunov P_t proof}
             \cP_t V(x) -\cP_s V(x) \leq  \int_s^t \left( -\gamma_1 \cP_\tau V(x) + K \right)\, d\tau , \quad t\geq s\geq 0.
         \end{equation}
        Therefore the hypothesis of \autoref{lemma:comparison_thm} are satisfied by the function $f(t) :=  \cP_t V(x)$, which has non-negative values and is continuous in time, thus we have the desired result
        \begin{equation}
        \label{eq:ch4lyapunovSDE}
              \cP_t V(x) \leq  e^{-\gamma_1 t}V(x) + K_V
        \end{equation}
        with $K_V = K/\gamma_1$.
        Our result will follow from the Generalised Harris' Theorem~\ref{thm:general_harris} if we are able to show that for all $\alpha \in (0, \alpha_0)$, there exists $N_*\in \N$ and $t_* \in \R_+$ such that for all $t>t_*$ and $N\in \N$ such that $N > N_*$: 
        \begin{enumerate}[label=(\roman*)]
        \item\label{item:contracting} the semimetric $d_N$ \eqref{eq:ch4defdN} is contracting for $\cP_t$; 
        \item\label{item:small} the level set $\lbrace x \in \cH\, : \, V(x) \leq 4K_V \rbrace$ is $d_N$-small. 
        \item \label{item:metric}There exists a complete metric $d_0$ such that $d_0\leq \sqrt{d_N}$, and further $\cP_t$ is Feller on $(\cH, d_0)$.
        \end{enumerate}
        %
        %
        We start by showing item~\ref{item:metric}.
        The Hilbert space $\cH$ is endowed with the norm $|\cdot|$ associated to its scalar product. Define $d_0(x,y) = |x - y|\wedge 1$. This is a complete metric that defines the same topology as the norm $|\cdot|$, and as $\cP_t$ is Feller in $(\cH, |\cdot|)$, it is in particular Feller in $(\cH, d_0)$. Furthermore, we have 
        \begin{equation*}
           \sqrt{ d_N(x,y)} \geq N^{1/2} |x- y|^\alpha \wedge 1 \geq |x- y|\wedge 1 =d_0(x,y)
        \end{equation*}
        as $\alpha\in (0, \alpha_0)$ with $\alpha_0\leq 1/2$.
        The remainder of the proof is dedicated to show the properties \ref{item:contracting} and \ref{item:small}.
    \paragraph{\ref{item:contracting} The semigroup $\cP_t$ is $d_N$--contracting.} 
        We have to show that there exists $\rho<1$ such that 
        \begin{equation*}
            W_{d_N}(P_t(x, \cdot), P_t(y, \cdot)) \leq \rho \, d_N(x, y)
        \end{equation*}
        for all $x, y$ such that $d_N(x, y) < 1$.
        Thanks to \autoref{lemma:boundWdN} we have that for all $\alpha< 2\gamma/\upsilon$
        \begin{multline}
        \label{eq:boundWdN}
             W_{d_N}(P_t(x, .), P_t(y, .)) \leq d_{TV}(\Law (W_n(s))_{s\leq t},\Law (\tW_n(s))_{s\leq t})\\
                + N C_\Xi \theta_\alpha(x,y) e^{- \chi\alpha t} 
        \end{multline}
        We want to apply \autoref{lemma:dTVBM} to bound the first term on the right hand side so we have to ensure that \eqref{eq:lemmaMdelta} holds. 
       In \eqref{eq:ch4tildeWn} we defined
        \begin{equation*}
            \tilde{W}_X(t) = W_X(t) + \int_0^t G(X, \tY)\, ds
        \end{equation*}
        and in particular, since $G$ is $n$-dimensional, we have 
          \begin{equation*}
            \Pi_n\tilde{W}_X(t) = \Pi_n W_X(t) + \int_0^t G(X, \tY)\, ds.
        \end{equation*}
        Then, using $Q_n$ covariance operator of the Wiener process $W_n = \Pi_n W_X$ as in \eqref{eq:covarianceQn}, in order to apply \autoref{lemma:dTVBM} we have to show that 
        \begin{equation}
        \label{eq:ch4ourMdelta}
            M_\delta  = \E \left(\int_0^t |Q_n^{-1/2}G(X, \tY)|^2 \, ds \right)^\delta 
        \end{equation}
      is finite. By \autoref{prop:ch4generalgirsanov} one has
        \begin{equation*}
             \int_0^t |Q_n^{-1/2}G(X, \tY)|^2 \, ds \leq \tfrac{c\| Q_n^{-1/2} \|^2}{\chi} |x - y|^2 \exp(\upsilon\left( |x|^2 + \Xi_\gamma \right))\left( 1 - e^{-\chi t }\right)
        \end{equation*}
        where 
        \begin{equation*}
            \upsilon := \frac{\kappa_1}{\kappa_2} \quand \chi = \kappa_0 - \upsilon \kappa_3>0.
        \end{equation*}
       Since Equation \eqref{eq:A2 martingale estimate} holds for $\Xi_\gamma$ we have that 
        \begin{equation*}
          \E \,  \exp(\upsilon \delta \Xi_\gamma ) < \infty \quad \text{if } \upsilon \delta < 2\gamma
        \end{equation*}
         and so for all $0<\delta < \left(2\gamma / \upsilon\right) \wedge 1$
         \begin{equation}
         \label{eq:boundourMdelta}
             M_\delta \leq \tilde{C}_\delta |x - y|^{2\delta} \exp(\upsilon \delta |x|^2) \quad \text{with } \tilde{C}_\delta =  \left(\tfrac{c\| Q_n^{-1/2} \|^2}{\chi}\right)^\delta \E \, e^{\upsilon \delta \Xi_\gamma }.
         \end{equation}
         Therefore, since condition \eqref{eq:lemmaMdelta} holds, the bound \eqref{eq:dTVBM1} in \autoref{lemma:dTVBM} and \eqref{eq:boundourMdelta} give
        \begin{align*}
            d_{TV}(\Law (W_n(s))_{s\leq t},\Law (\tW_n(s))_{s\leq t})\leq 2^{(1- \delta) /(1+ \delta)} M_\delta^{\frac{1}{1+ \delta}}\\
            \leq C_{\frac{\delta}{1 + \delta}} \left(|x - y|^{2} \exp(\upsilon |x|^2)\right)^{\frac{\delta}{1 + \delta}}
        \end{align*}
        where $C_{\frac{\delta}{1 + \delta}} = 2^{(1- \delta) /(1+ \delta)}\tilde{C}_\delta$. Since $\delta \in (0, \left(2\gamma / \upsilon\right) \wedge 1)$, the exponent $ \delta/(1+ \delta) = :\alpha $ is in the interval
        \begin{equation*}
          0< \alpha < \alpha_0 = \frac{1}{2}\wedge \frac{2\gamma}{\upsilon + 2\gamma} < \frac{2\gamma}{\upsilon}.
        \end{equation*}
        %
        Given the definition of the premetric $\theta_\alpha$ \eqref{eq:ch4 theta(u,v)} we have then shown that for all $\alpha \in (0, \alpha_0)$ there exists $C_\alpha>0$ such that
         \begin{equation*}
             d_{TV}(\Law (W_n(s))_{s\leq t},\Law (\tW_n(s))_{s\leq t}) \leq C_\alpha \theta_{\alpha} (x,y) \fa x,y\in \cH, \; x \neq y.
        \end{equation*}
        Going back to \eqref{eq:boundWdN}, we have proved that
        \begin{align*}
            W_{d_N}(P_t(x, \cdot\,), P_t(y, \cdot\,)) \leq  C_\alpha \theta_\alpha(x,y) + N C_{\Xi} e^{- \chi\alpha t }\theta_\alpha(x,y) \\
            =  N\theta_\alpha(x,y)\left( C_\alpha N^{-1} + C_\Xi e^{- \chi\alpha t } \right)
        \end{align*}
        and, inverting the roles of $x$ and $y$, we get in the same way
        \begin{equation*}
             W_{d_N}(P_t(x, \cdot\,), P_t(y, \cdot\,)) \leq N\theta_\alpha(y,x)\left( C_\alpha N^{-1} + C_\Xi e^{- \chi\alpha t } \right).
        \end{equation*}
        Therefore
        \begin{equation}
        \label{eq:boundWdN2}
             W_{d_N}(P_t(x, \cdot\,), P_t(y, \cdot\,)) \leq \left(N\theta_\alpha(x,y)\wedge N \theta_\alpha(y,x)\right) \left( C_\alpha N^{-1} + C_\Xi e^{- \chi\alpha t } \right)
        \end{equation}
        and if $d_N(x,y)<1$ we have that 
         \begin{equation*}
             W_{d_N}(P_t(x, \cdot\,), P_t(y, \cdot\,)) \leq d_N(x,y) \left( C_\alpha N^{-1} + C_\Xi e^{- \chi\alpha t } \right).
        \end{equation*}
        Then for all $N\in \N$ and $t>0$ such that 
        \begin{equation}
        \label{eq:ch4proofbutkconditionNt}
             \rho:=  C_\alpha N^{-1} + C_\Xi e^{- \chi\alpha t }  < 1
        \end{equation} 
        we showed that $\cP_t$ is $d_N$-contracting. 
  
        \paragraph{\ref{item:small} The sublevel set of $V$ is $d_N$-small.} We have now to prove that there exists $\varepsilon>0$ such that 
        \begin{equation}
        \label{eq:small}
            W_{d_N}(P_{t}(x, \cdot),P_{t}(y, \cdot)) \leq 1 - \varepsilon
        \end{equation}
        for all $x, y \in \lbrace z\in \cH \, : \, V(z) \leq 4K_V \rbrace$.
      Although a closer examination of the previous step will show that \eqref{eq:boundWdN2} holds in fact for all $x,y$ (not only for $x, y$ such that $d_N(x, y) < 1$), there is no way to obtain~\eqref{eq:small} directly from~\eqref{eq:boundWdN2}. 
      Despite $\theta_\alpha(x,y)$ being bounded over the sublevel set $\lbrace V \leq 4K_V \rbrace$ (see Eq.~\ref{eq:ch4proofbutkboundthetaLyap} below), as soon as $\theta_\alpha(x,y) \wedge \theta_\alpha(y,x) >0$, we would have to make $C_\alpha + N C_\Xi e^{- \chi\alpha t }$ arbitrarily small to have \eqref{eq:small}, which is impossible.
        However we can use Equation~\eqref{eq:dTVBM2} in \autoref{lemma:dTVBM} to arrive at a different estimate for $ d_{TV}(\Law (W_n(s))_{s\leq t},\Law (\tW_n(s))_{s\leq t})$ in \eqref{eq:boundWdN}.
%
        Using that result,
        we have that 
        \begin{equation*}
          d_{TV}(\Law (W_n(s))_{s\leq t},\Law (\tW_n(s))_{s\leq t}) \leq 1 - \frac{1}{6}\min\left(\frac{1}{8}, \exp(-(2^{2- \delta}M_{\delta })^{1/\delta})\right),
        \end{equation*}
        where $M_\delta$ is as in \eqref{eq:ch4ourMdelta}. 
        Since, by Assumption~\ref{ref:A4}, $z\mapsto |z|^2$ is bounded over $\lbrace V \leq 4K_V \rbrace$, so is $\theta_\alpha(x,y)$ i.e.~there exists $C_K>0$ such that
        \begin{equation}
            \label{eq:ch4proofbutkboundthetaLyap}
            \theta_\alpha(x, y) = |x - y|^{2\alpha}e^{\alpha \upsilon|x|^2} < C_K 
        \end{equation} 
        for all $x, y\in \lbrace V\leq 4K_V \rbrace$.  
         Thanks to \eqref{eq:boundourMdelta} and \eqref{eq:ch4proofbutkboundthetaLyap} it follows that
        \begin{equation*}
             M_\delta \leq \tilde{C}_\delta |x - y|^{2\delta} \exp(\upsilon \delta |x|^2) \leq \tilde{C}_\delta C_K
        \end{equation*}
        for all $0< \delta< (2\gamma/\upsilon)\wedge 1$. 
        Setting $\varepsilon_1(\delta)$ to be 
        \begin{equation*}
        \label{eq:ch4epsilon1}
            \varepsilon_1 = \frac{1}{6}\min\left(\frac{1}{8}, \exp(-(2^{2- \delta} \tilde{C}_\delta C_K)^{1/\delta})\right),
        \end{equation*}
        we have that 
        \begin{equation}
        \label{eq:ch4proofbutksmallpt2}
          d_{TV}(\Law (W_n(s))_{s\leq t},\Law (\tW_n(s))_{s\leq t}) \leq 1 - \varepsilon_1.
        \end{equation}
        Then from \eqref{eq:boundWdN} i.e.~
        \begin{equation*}
             \Wtd{P_{t}(x, \cdot)}{ P_{t}(y, \cdot)} \leq d_{TV}(\Law (W_n(s))_{s\leq t},\Law (\tW_n(s))_{s\leq t}) + N  C_\Xi e^{- \alpha \chi t} \theta_\alpha(x,y)
        \end{equation*}
        by \eqref{eq:ch4proofbutkboundthetaLyap} and \eqref{eq:ch4proofbutksmallpt2} we have
        \begin{equation*}
            \Wtd{P_{t}(x, \cdot)}{ P_{t}(y, \cdot)} \leq  1 - \varepsilon_1+ N C_K C_\Xi e^{- \chi\alpha t} .
        \end{equation*}
        %
        Then for all $N\in \N$ and $t>0$ such that %
        \begin{equation}\label{eq:ch4proofbutkconditionNt2}
            \varepsilon_1 - N C_K C_\Xi e^{- \chi\alpha t } = : \varepsilon > 0
        \end{equation} 
        we showed that the level set $\lbrace x\in \cH \, : \, V(x) \leq 4K_V \rbrace$ is $d_N$-small. 
%
The proof is completed by noting that there are $N_*, t_*$ so that conditions~\eqref{eq:ch4proofbutkconditionNt} and~\eqref{eq:ch4proofbutkconditionNt2} are satisfied simultaneously.
%
%
    \end{proof}
%
    Thanks to the result just shown one has a precise formulation of the spectral gap property \autoref{cor:spectralgap}, crucial ingredient to develop response theory. In fact by \autoref{thm:exponentialergodicity} and \autoref{thm:general_harris} we know that such a distance--like function is $\td(x,y)^2 = d_N(x,y)(1 + V(x)+ V(y)) $, with $d_N$ as in \eqref{eq:ch4defdN} and $V$ a Lyapunov function of the system. Note that, by definition, the distance-like function $d_N$ is comparable to the $\alpha_0$--power of the original metric on the space, with $\alpha_0$ as in \autoref{thm:exponentialergodicity}. Therefore $\cP_t$, the semigroup associated to the solution of \eqref{eq:ch4generalSDE}, exhibits a spectral gap on the set of observables which are $\alpha_0$--H\"{o}lder continuous over the level sets of the Lyapunov function $V$.
%
\section{Spectral Gap for the stochastic 2LQG model}
\label{sec:exp_stab_QG}
Let $\q(t, \q_0)$ be the solution of the stochastic two--layer quasi--geostrophic model \eqref{eq:QG_stoc_vec} on the Hilbert space $\cH =(\Hmo, \vertiii{\cdot}_{-1})$ and $\cP_t$ the associated semigroup i.e.~
\begin{equation*}
     \cP_t \varphi (\q_0) = \E \, \varphi(\q(t, \q_0)) \fa \varphi \in B_b(\cH).
\end{equation*}
The semigroup is Feller in the metric introduced by $\vertiii{\cdot}_{-1}$ since the solution can be shown to be continuous in the initial conditions with respect to the metric induced by $\vertiii{\cdot}_{-1}$ (see \cite{thesis}).
Then we want to ensure that this system has exponential ergodicity as described in \autoref{sec:method}, namely there exists a unique invariant measure $\mu_*$, there exists $C>0$, $\gamma>0$ such that 
 \begin{equation*}
     W_{\td}\left( P_t(x, \cdot), \mu_*\right) \leq C\left(1 + V(x)\right) e^{-\gamma t} \fa x\in \cH, \, t\geq 0
 \end{equation*}
and $\cP_t$ has a spectral gap i.e.~there exists $\rho<1$ such that 
\begin{equation*}
    \| \cP_t \varphi - \langle \varphi, \mu_*\rangle \|_{\td} \leq \rho \| \varphi - \langle \varphi, \mu_*\rangle \|_{\td} 
\end{equation*}
for all $\varphi: \cH \to \R$ with $\| \varphi \|_{\td}<\infty$. We consider $\td$ as in \autoref{thm:general_harris} and $d_N$ as in \eqref{eq:ch4defdN}, now in $\cH = \left( \Hmo, \vertiii{\cdot}_{-1}\right)$, namely
    \begin{align}
    \begin{split}
          \td(\x, \y)^2 &= d_N(\x, \y) (1 + V(\x) + V(\y))\quad \text{with}\\
            d_N(\x, \y) &= N \theta_\alpha (\x, \y) \wedge  N \theta_\alpha (\x, \y) \wedge 1 \quad \text{and}\\
            \theta_\alpha (\x, \y) &= e^{\alpha \upsilon \vertiii{\x}_{-1}^2} \vertiii{\x - \y}_{-1}^{2\alpha}, \quad \alpha \in (0, \alpha_0)
    \end{split}\label{eq:td_dN_theta_QG}
    \end{align}
 for $\x, \y\in \Hmo$ and an appropriate choice of the parameters $N$, $\upsilon$ and $\alpha_0$ given in the following main theorem of this section. 
 Further, $\{\lambda_n\}_{n \in \N}$ is an increasing sequence of eigenvalues of $-\Delta$.
   \begin{theorem}
    \label{thm:exp_stab_QG}
         Given the stochastic 2LQG model \eqref{eq:QG_stoc_vec}, there exists $r_0$ (depending on $\nu, Q$ and the forcing $f$, see Eq.~\eqref{eq:condition_r}) with the following properties:
         Suppose that 
         \begin{enumerate}[label=(\roman*)]
             \item $r > r_0$, and
             \item there exists $n\in \N$ such that $\Pi_n \cH  \subset \range Q$, and 
             \item $\nu - 2r \lambda_n^{-1}>0$. 
         \end{enumerate}
        Then there exists a unique invariant measure $\mu_*$ as well as $t>0$ and $\rho <1$ such that 
        \begin{equation}
            \Wtd{P_t(\q_0, \cdot)}{P_t(\tq_0, \cdot)} \leq \rho\, \td(\q_0, \tq_0) 
        \end{equation}
        for all $\q_0, \tq_0\in \cH$. Here $\td$ is as in \eqref{eq:td_dN_theta_QG} with Lyapunov function $V(\x) = \vertiii{\x}_{-1}^2$ and parameters
        \begin{equation*}
           \upsilon = \frac{k_B}{\nu - \frac{2 \gamma \Tr Q}{\lambda_1^2}} \quand \alpha_0 = \frac{1}{2} \wedge \frac{2 \gamma }{\upsilon + 2\gamma},
        \end{equation*}
        where $0<\gamma< \lambda_1^2 \nu /2 \Tr Q$, $k_B := k_0^2/2\nu$ and $k_0$ as in \autoref{lemma:ch1propertiesB}.
    \end{theorem}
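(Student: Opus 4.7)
My plan is to verify Assumption~A from Section~\ref{sec:method} and then invoke Theorem~\ref{thm:exponentialergodicity} directly. The Hilbert space is $\cH=(\Hmo,\vertiii{\cdot}_{-1})$ with the ``strong'' norm being $\vertiii{\cdot}_0$ on $\bLtwo$, and the Lyapunov function is $V(\q)=\vertiii{\q}_{-1}^2$. The control $G$ will act only on the top layer, supported in the finite-dimensional space $\Pi_n\cH\subset\range Q$ so that $Q_n$ is invertible as required.

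For assumptions~\ref{ref:A2} and~\ref{ref:A4}, the plan is to apply It\^o's formula to $V(\q)=\vertiii{\q}_{-1}^2=-(\q,\bfpsi)$. Using \eqref{eq: (B(U,V), U) = 0} for the nonlinearity, periodic boundary conditions for the $\beta$-term, and integration by parts for the viscous and friction terms, one gets an identity of the shape
\begin{equation*}
    d\vertiii{\q}_{-1}^2 + 2\nu|\Delta\bfpsi|^2\,dt + 2r h_2\|\psi_2\|^2\,dt
    = \bigl[-2h_1(\psi_1,f)+ h_1\Tr Q\bigr]\,dt - 2h_1(\psi_1, dW).
\end{equation*}
Young's inequality on the forcing term and Poincar\'e to pass from $|\Delta\bfpsi|^2$ to $\vertiii{\q}_0^2$ (via \eqref{eq:ch12normpoincare}) yield the deterministic part of \ref{ref:A2} with $\kappa_2$ proportional to $\nu$. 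The martingale part $M_t=-2h_1\int(\psi_1,dW)$ has quadratic variation bounded by $2\Tr Q\cdot\int|\psi_1|^2\,ds$; applying the exponential martingale inequality with parameter $2\gamma$ and using Poincar\'e produces a remainder term $\gamma\,\tfrac{2\Tr Q}{\lambda_1^2}\int|\Delta\bfpsi|^2\,ds + \Xi_\gamma$ with $\bP(\Xi_\gamma\ge R)\le e^{-2\gamma R}$. Moving this term to the left makes the effective dissipation coefficient $\kappa_2=\nu-\tfrac{2\gamma\Tr Q}{\lambda_1^2}$, exactly as in the theorem. Taking expectations in the same estimate, applying Poincar\'e, and using the comparison lemma (Lemma~\ref{lemma:comparison_thm}) then deliver \ref{ref:A4}.

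For \ref{ref:A1} and \ref{ref:A3}, I would take $G(X,\tY)=-\lambda_c\Pi_n(q_1-\tilde{q}_1)$ (equivalently a finite-rank feedback, in the top layer only), for a suitable constant $\lambda_c$; then \ref{ref:A3} is immediate from boundedness of $\Pi_n$. For \ref{ref:A1}, the difference $\bfu=\q-\tq$ satisfies a deterministic equation, and the energy estimate for $\vertiii{\bfu}_{-1}^2$ reads
\begin{equation*}
    \tfrac12\tfrac{d}{dt}\vertiii{\bfu}_{-1}^2 + \nu|\Delta\bfu^\psi|^2 + rh_2\|u_2^\psi\|^2
    \le (\bfu^\psi, B(\bfpsi,\bfpsi)-B(\tpsi,\tpsi)) + h_1(u_1^\psi,G).
\end{equation*}
Expanding $B(\bfpsi,\bfpsi)-B(\tpsi,\tpsi)=B(\bfpsi,\bfu^\psi)+B(\bfu^\psi,\tpsi)$, relations \eqref{eq: (B(U,V), W)= - (B(W,V), U) }--\eqref{eq: (B(U,V), U) = 0} kill one of the two terms, and \eqref{eq:bound(B(u,u),v)} with Young's inequality bounds the surviving term by $\tfrac{\nu}{2}|\Delta\bfu^\psi|^2 + k_B\|\bfu^\psi\|^2|\Delta\bfpsi|^2$ with $k_B=k_0^2/(2\nu)$. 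This gives Gronwall's inequality $\tfrac{d}{dt}\vertiii{\bfu}_{-1}^2\le(-\kappa_0+\kappa_1|\Delta\bfpsi|^2)\vertiii{\bfu}_{-1}^2$, which integrates to the form required by \ref{ref:A1} and pins down $\kappa_1\sim k_B$, matching the declared $\upsilon=\kappa_1/\kappa_2$.

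The main obstacle is extracting a genuine dissipation rate $\kappa_0>0$ for the whole vector $\bfu$ from the three available mechanisms: viscosity, bottom friction, and the top-layer control. The control handles the low modes of $u_1^\psi$, and high-mode viscosity $\nu|\Delta(1-\Pi_n)u_1^\psi|^2\ge \nu\lambda_n\|(1-\Pi_n)u_1^\psi\|^2$ handles the rest of $u_1^\psi$; what is delicate is the bottom layer, which receives no direct noise and no direct control. Here the friction term $rh_2\|u_2^\psi\|^2$ must single-handedly dominate the bottom-layer contribution to $k_B\|\bfu^\psi\|^2|\Delta\bfpsi|^2$, which is exactly the ``passivity'' condition forcing $r>r_0$. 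Finally, the compatibility condition $\nu-2r\lambda_n^{-1}>0$ from hypothesis~(iii) enters when disentangling the friction and viscous contributions on the bottom layer so that $\kappa_0>\kappa_1\kappa_3/\kappa_2$ holds uniformly; choosing $n$ large after fixing $r$ is what makes this inequality achievable. Once all four parts of Assumption~A are in hand, Theorem~\ref{thm:exponentialergodicity} yields the stated contraction with the parameters $\upsilon$ and $\alpha_0$ displayed in the theorem.
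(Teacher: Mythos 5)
Your overall architecture is the paper's: verify \nameref{asuA} with $V(\q)=\vertiii{\q}_{-1}^2$, obtain \ref{ref:A2} and \ref{ref:A4} from It\^o's formula plus the exponential martingale inequality (your $\kappa_2=\nu-2\gamma\Tr Q/\lambda_1^2$ matches; note the It\^o correction is $T_Q=\Tr[(Q^{1/2})^*\tA^{-1}Q^{1/2}]$ rather than $h_1\Tr Q$, since the functional is the quadratic form $(\q,\tA^{-1}\q)$, but this only shifts the constant $\kappa_3$ and hence $r_0$), and obtain \ref{ref:A1}, \ref{ref:A3} from a finite--rank feedback acting only on the top layer, with $r>r_0$ coming from $\kappa_0>\kappa_1\kappa_3/\kappa_2$. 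However, your control has the wrong sign, and as written \ref{ref:A1} fails. Since $G$ is \emph{added} to the $\tY$ equation \eqref{eq:ch4equationYtilde}, nudging $\tY$ toward $X$ requires $G\propto +(q_1-\tilde{q}_1)$; with $G=-\lambda_c\Pi_n(q_1-\tilde{q}_1)$ the control contributes $h_1(G,\phi_1)=+\lambda_c h_1\|\Pi_n\phi_1\|^2-\lambda_c h_1F_1(\phi_2-\phi_1,\Pi_n\phi_1)$ to the right--hand side of the energy identity (here $\bfphi=\bfpsi-\tpsi$), whose leading term is destabilizing. Even after flipping the sign, feeding back the vorticity difference $q_1-\tilde{q}_1=\Delta\phi_1+F_1(\phi_2-\phi_1)$ leaves the indefinite cross term $F_1(\phi_2-\phi_1,\Pi_n\phi_1)$ to be absorbed by Young's inequality into the friction and viscosity, degrading the constants. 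The paper avoids this by taking $G=a\Pi_n\Delta\phi_1$ with $a=r$, for which $(G,\phi_1)=-a\|\phi_1\|^2+a\|\oPi_n\phi_1\|^2$ cleanly.

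Two further points of bookkeeping. You attribute hypothesis (iii), $\nu-2r\lambda_n^{-1}>0$, to the bottom layer; it actually arises in the top layer, where the finite--rank error of the control, $a\|\oPi_n\phi_1\|^2\le a\lambda_n^{-1}|\Delta\phi_1|^2$, must be absorbed by the viscosity, which after setting $a=r$ is exactly condition (iii). And the friction does not dominate the nonlinear term $k_B\|\bfphi\|^2|\Delta\bfpsi|^2$ pathwise: that term is carried along as the $\kappa_1\int_0^t\|X(s)\|^2\,ds$ contribution in \ref{ref:A1} and is only beaten on time average through \ref{ref:A2}, which is precisely why the passivity condition takes the form $\kappa_0>\kappa_1\kappa_3/\kappa_2$, i.e.\ \eqref{eq:condition_r}. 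With the sign of the control corrected (or with the control replaced by $r\Pi_n\Delta\phi_1$) your argument coincides with the paper's proof.
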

    
    \begin{proof}
        As the semigroup $\cP_t$ is Feller in $\cH$ we only have to ensure that \nameref{asuA} is satisfied, as then \autoref{thm:exponentialergodicity} gives the desired result.
        Consider, beside the original equation \eqref{eq:QG_stoc_vec}, the controlled system 
    \begin{align}
    \label{eq:QG_controlled}
     \begin{split}
         &d\tq + \left(B(\tpsi, \tpsi) + \beta \partial_x\tpsi\right)\, dt = \nu \Delta^2\tpsi \;dt+  \binom{f + G(\q, \tq)}{- r \Delta \tilde{\psi}_1}\, dt + d\bm{W}\\
         &\tq = (\Delta + M) \tpsi
     \end{split}
    \end{align}
    with initial condition $\tq (0) = \tq_0 \neq \q_0$, $\bm{W} = (W, \, 0)^t$, and control $G$ to be determined later. 
    Define the difference variables
    \begin{equation}
        \bfxi = \q - \tq \quand\bfphi = \bfpsi - \tilde{\bfpsi}.
    \end{equation}
    Then $\bfxi = (\Delta + M)\bfphi $ satisfies the equation
    \begin{equation}
        \label{eq: error dynamic}
        \dv{\bfxi}{t} + B(\bfphi, \tilde{\bfpsi}) + B(\bfpsi, \bfphi) = \nu \Delta^2 \bfphi - \binom{G(\q, \tq)}{r\Delta \phi_2} 
    \end{equation}
    with initial condition $\bfxi_0 = \q_0 - \tq_0\neq 0$.
    
     Let $\{\lambda_k\}_{k \in \N}$ be an increasing sequence of eigenvalues of $-\Delta$ with corresponding eigenvectors $e_k$ forming an orthonormal basis for $\cH$. Consider the following finite dimensional control 
    \begin{equation}
        \label{eq:ourcontrol}
        G(\q, \tq) =  a \Pi_n (\Delta \psi_1 - \Delta \tilde{\psi}_1)=  a  \Pi_n \Delta \phi_1 \\
    \end{equation}
    where $\Pi_n$ is the projection onto $\cH_n= \text{span}\lbrace e_k, \; k = 1, \ldots n \rbrace$ and $a >0 $ is a parameter to be found below. 
    %
    %
   The controlled system is well posed in the sense of \autoref{thm:solutions} as it can be treated effectively as the uncontrolled system. In fact the control can be split into two parts with finite rank, a lower level perturbation of the viscosity term and an additional time dependent forcing which is continuous in time.

    \paragraph{Proof of A1.} 
    Taking the $\bLtwo$ scalar product of \eqref{eq: error dynamic} with $\bfphi$, we obtain 
    \begin{equation*}
        \left(\dv{\bfxi}{t}, \bfphi\right) + (B(\bfpsi, \bfphi), \bfphi) = \nu |\Delta \bfphi|^2 - h_1(a\Pi_n \Delta \phi_1, \phi_1) + r h_2\|\phi_2\|^2,
    \end{equation*}
    where we have used the fact that $(B(\bfxi, \bfpsi),\bfxi) = 0$ of \eqref{eq: (B(U,V), U) = 0}. 
    Recall that by \eqref{eq:ch1(u,v)=|u|*} we have
    \begin{align*}
         \left(\dv{\bfxi}{t}, \bfphi\right) =  \dfrac{\mathrm{d}}{\mathrm{d}t}(\bfxi, \bfphi) - \left(\bfxi, \dv{\bfphi}{t}\right) = - \dv{\vertiii{\bfxi}_{-1}^2}{t} - \left( -\tA\bfphi, \dv{\bfphi}{t}\right)
    \end{align*}
    and since the operator $\tA$ is self-adjoint
    \begin{align*}
        \left(\dv{\bfxi}{t}, \bfphi\right) &=  - \dv{\vertiii{\bfxi}_{-1}^2}{t} - \left( \bfphi, \dv{(-\tA\bfphi)}{t}\right)\\
        2\left(\dv{\bfxi}{t}, \bfphi\right) &=   - \dv{\vertiii{\bfxi}_{-1}^2}{t}.
    \end{align*}
    Then we obtain the following equation
    \begin{equation*}
        \frac{1}{2}\dv{}{t}\vertiii{\bfxi}_{-1}^2 +\nu |\Delta \bfphi|^2 + rh_2\|\phi_2\|^2  =  (B(\bfpsi, \bfphi), \bfphi) + h_1(a \Pi_n \Delta \phi_1 , \phi_1) .
    \end{equation*}
   Denoting the orthogonal complement of $\Pi_n$ as $\oPi_n$, we can write the control term as \( G(\q, \tq) = a\Delta \phi_1 - a \oPi_n \Delta \phi_1\). One can consider the term $a \oPi_n \Delta \phi_1$ as an error term because we have to work with finite dimensional controls.
    Then, given 
    the generalised Poincar\'{e} inequalities for any $n\geq 1$ i.e.~
    \begin{equation*}
        \| \Pi_n \phi_1 \|_{k+1}^2 \leq \lambda_n \|\Pi_n \phi_1 \|_k^2 \quand \|\oPi_n \phi_1 \|_k^2 \leq \lambda_n^{-1} \|\oPi_n \phi_1 \|_{k+1} ^2,
    \end{equation*}
   we have an appropriate bound for the control term 
    \begin{align}
        (G(\q, \tq)), \phi_1) &= a (\Delta \phi_1 - \oPi_n\Delta \phi_1, \phi_1) \nonumber \\&= - a h_1 \| \phi_1\|^2 + a h_1\| \oPi_n \phi_1\|^2 \nonumber \\
        &\leq - a h_1 \| \phi_1\|^2 + a h_1\lambda_n^{-1} |\Delta \phi_1|^2 .\label{eq:bound control}
    \end{align}
    Given the estimate \eqref{eq:bound control} for the control, we get 
    \begin{equation*}
         \frac{1}{2}\dv{}{t}\vertiii{\bfxi}_{-1}^2 +\nu |\Delta \bfphi|^2 + rh_2\|\phi_2\|^2  \leq  (B(\bfpsi, \bfphi), \bfphi) - a h_1 \| \phi_1\|^2  + a h_1\lambda_n^{-1} |\Delta \phi_1|^2 .
    \end{equation*}
    By \eqref{eq:bound(B(u,u),v)} and \eqref{eq: (B(U,V), W)= - (B(W,V), U) } we know that 
    \begin{equation*}
        |(B(\bfpsi, \bfphi), \bfphi)| = |(B(\bfphi, \bfphi), \bfpsi)| \leq k_0 \| \bfphi \| |\Delta \bfphi| |\Delta \bfpsi|,
    \end{equation*}
    and by Young inequality, given $k_B = k_0^2/2\nu$ we have 
    \begin{equation*}
         |(B(\bfpsi, \bfphi), \bfphi)| \leq \tfrac{\nu}{2}|\Delta \bfphi|^2 +  k_B |\Delta \bfpsi|^2\| \bfphi \|^2 .
    \end{equation*}
    It follows that
    \begin{equation*}
        \dv{}{t}\vertiii{\bfxi}_{-1}^2 +\nu|\Delta \bfphi|^2 + 2a h_1 \| \phi_1\|^2+  2rh_2\|\phi_2\|^2  \leq 2k_B |\Delta \bfpsi|^2 \| \bfphi \|^2 + 2a h_1\lambda_n^{-1} |\Delta \phi_1|^2,
    \end{equation*}
    and in particular setting $a = r$, 
    \begin{equation}
    \label{eq:ch3QGenergyestimate}
        \dv{}{t}\vertiii{\bfxi}_{-1}^2 +\left(\nu - 2r \lambda_n^{-1} \right)|\Delta \bfphi|^2 \leq  \| \bfphi \|^2 \left( 2k_B |\Delta \bfpsi|^2 - 2r \right).
    \end{equation}
    Choosing $n$ so that 
    \begin{equation}
    \label{eq:condition_n}
        \nu - 2r \lambda_n^{-1} > 0,
    \end{equation}
    Gronwall's lemma gives that
    \begin{equation*}
        \vertiii{\bfxi(t)}_{-1}^2 \leq \vertiii{\bfxi(0)}_{-1}^2 \exp \left( - 2 r t + 2k_B \int_0^t|\Delta \bfpsi|^2  \, ds \right).
    \end{equation*}
    Assumption~\ref{ref:A1} follows immediately with constants \( \kappa_0 = 2r\) and \( \kappa_1 = 2k_B\).

\paragraph{Proof of A2.} 
     Consider the original model \eqref{eq:QG_stoc_vec} and let us apply It\^{o} formula to compute $ d\vertiii{\q}_{-1}^2$. Since $\q = - \tA\bfpsi$ we have 
    \begin{equation*}
     - d\vertiii{\q}_{-1}^2  = d(\q, \bfpsi) = - d\left( \q, \tA^{-1}\q \right) ,
    \end{equation*}
     and since $\tA$ is self-adjoint we have 
    \begin{equation*}
        d\left( \q, \tA^{-1}\q \right) = 2 (d \q, \tA^{-1}\q) + \Tr \left[(Q^{1/2})^* \tA^{-1} Q^{1/2}\right]\, dt.
    \end{equation*}
    %
    Therefore, setting 
     \begin{equation*}
        T_Q :=\Tr \left[(Q^{1/2})^* \tA^{-1} Q^{1/2}\right],
    \end{equation*}
    we have that $ - d\vertiii{\q}_{-1}^2 =  2(d \q, \bfpsi) -   T_Q\, dt$, which gives
    \begin{equation*}
         - d\vertiii{\q}_{-1}^2 = 2\left(\nu (\Delta^2\bfpsi, \bfpsi) + h_1(f, \psi_1) - r(\Delta \psi_2, \psi_2) - \tfrac{1}{2}T_Q \right)\, dt + 2 (\bfpsi, d\mathbf{W}) 
    \end{equation*}
    where we have used that $(B(\bfpsi, \bfpsi), \bfpsi) =0$ and $(\partial_x  \bfpsi, \bfpsi) = 0$.
    By Green's theorem and the definition of $\mathbf{W}$
    \begin{equation}
    \label{eq:ch2.3.1proof}
        d\vertiii{\q}_{-1}^2 = - 2\left(\nu |\Delta \bfpsi|^2 + h_1(f, \psi_1) + rh_2\|\psi_2\|^2 - \tfrac{1}{2}T_Q \right) \, dt - 2h_1\left( \psi_1, dW\right) .
    \end{equation}
    Next, using Cauchy-Schwartz, Young and Poincar\'{e} inequalities we can bound the deterministic forcing term as follows
    \begin{equation}
    \label{eq:forcing}
        -  2(f, h_1\psi_1) \leq  2| (f, h_1\psi_1) |  \leq  \tfrac{h_1}{\nu}\|f\|_{-2}^2 + \nu h_1|\Delta \psi_1|^2
    \end{equation} 
    and, using this estimate in \eqref{eq:ch2.3.1proof}, we have
    \begin{equation*}
       \vertiii{\q(t)}_{-1}^2 - \vertiii{\q_0}_{-1}^2 + \nu \int_0^t |\Delta \bfpsi|^2 \, ds + 2rh_2 \int_0^t\|\psi_2\|^2\, ds \leq \kappa_3 t  + 2h_1X_t
    \end{equation*}
     where $\kappa_3 = \tfrac{h_1}{\nu}\|f\|_{-2}^2 + T_Q $ and $X_t$ is defined by 
    \begin{equation*}
        X_t := \int_0^t (\psi_1(s), dW(s)).
    \end{equation*}
    The quadratic variation of this process is 
    \begin{equation*}
         \langle X\rangle_t := \int_0^t \|(\psi_1(s), \cdot) \|_{L_2^0}^2 \, ds = \int_0^t \sum_{k\in \N} |(\psi_1, Q^{1/2}e_k)|^2 \, ds .
    \end{equation*}
    By Cauchy-Schwartz inequality, this can be bounded by
    \begin{equation}
        \label{eq:boundquadvar_bfPoincare}
        \langle X\rangle_t \leq \int_0^t|\psi_1|^2 \sum_{k\in \N}  |Q^{1/2}e_k|^2 \, ds = \Tr Q \int_0^t  |\psi_1(s)|^2 \, ds .
    \end{equation}
   Then, since $X_t$ is a continuous martingale, it can be shown that for all $\gamma>0$ and $R>0$
    \begin{equation*}
        \bP \left( \sup_{t\geq 0} \left(X_t - \gamma \langle X \rangle_t\right) > R\right)\leq e^{- 2\gamma R}
    \end{equation*}
    hence, setting $\Xi_\gamma :=\sup_{t\geq 0} \left(X_t - \gamma \langle X \rangle_t\right)$,
    \begin{equation*}
        \E \exp(K \Xi_\gamma) < \infty \fa K < 2\gamma.
    \end{equation*}
    Therefore with a simple manipulation we get 
        \begin{multline}
        \label{eq:ch2energyestimate|q|*}
           \vertiii{\q(t)}_{-1}^2 - \vertiii{\q_0}_{-1}^2 + \nu \int_0^t  |\Delta \bfpsi|^2 \, ds + 2 r h_2\int_0^t \| \psi_2\|^2 \, ds  \leq \\ \kappa_3 t + 2h_1 \Xi_\gamma + 2h_1\gamma \langle X \rangle_t .
        \end{multline}
        Using Poincar\'{e} inequality twice in \eqref{eq:boundquadvar_bfPoincare} we obtain 
        \begin{equation*}
            \langle X\rangle_t \leq \frac{\Tr Q}{\lambda_1^2}\int_0^t |\Delta \psi_1|^2 \; ds 
        \end{equation*}
        and using this in \eqref{eq:ch2energyestimate|q|*} gives
        %
        \begin{equation*}
             \vertiii{\q(t)}_{-1}^2 - \vertiii{\q_0}_{-1}^2 + \left( \nu - \tfrac{2\gamma \Tr Q}{\lambda_1^2}\right)\int_0^t  |\Delta \bfpsi |^2 \, ds   - \kappa_3 t  \leq 2h_1 \Xi_\gamma .
        \end{equation*}
        Finally, Assumption~\ref{ref:A2} is satisfied with
        \begin{equation*}
            \kappa_2 =  \nu - \tfrac{2\gamma \Tr Q}{\lambda_1^2} \quand  \kappa_3 =  \tfrac{h_1}{\nu}\|f\|_{-2}^2 + T_Q,
        \end{equation*}
        for all arbitrary parameter $\gamma>0$ such that $\kappa_2 > 0$, and choices of parameters of the system such that $\kappa_0> \kappa_1\kappa_3/\kappa_2$. For example pick $\gamma = \lambda_1^2 \nu / 4\Tr Q$ so that  
        \begin{equation}
        \label{eq:condition_r}
            r   > \tfrac{2k_B}{ \nu } \left(\tfrac{h_1}{\nu}\|f\|_{-2}^2 + T_Q\right) =: r_0
        \end{equation}
    %
        \paragraph{Proof of A3.} Recall the generalized Poincar\'{e} inequality $|\Pi_n \Delta\varphi |^2 \leq \lambda_n \|\Pi_n \Delta\varphi\|^2_{-1}$. Then 
        \begin{align*}
            |G(\q, \tq)|^2  &= |a \Pi_n \Delta (\psi_1 - \tilde{\psi}_1)|^2 \leq \lambda_n a ^2\|\Pi_n\Delta (\psi_1 - \tilde{\psi}_1)\|_{-1}^2 \\
            &\leq \lambda_n a ^2\|\psi_1 - \tilde{\psi}_1\|^2 \leq \lambda_n a^2 \vertiii{\q - \tq}_{-1}^2
        \end{align*}
        giving the desired inequality with $c = a^2\lambda_n$. 
        \paragraph{Proof of A4.} We want to show that $V(\x) :=\vertiii{\x}_{-1}^2$ satisfies Assumption~\ref{ref:A4}. 
       Integrating \eqref{eq:ch2.3.1proof} over $[s,t]$ we obtain, by dropping the term $rh_1 \|\psi_2\|^2$ and estimating the forcing term as in \eqref{eq:forcing},
        \begin{multline*}
            \vertiii{\q(t)}_{-1}^2  - \vertiii{\q(s)}_{-1}^2 + \nu \int_s^t |\Delta \bfpsi|^2 \, d\tau - (t- s) T_Q \leq \\  \tfrac{ h_1 \|f\|_{-2}^2}{ \nu }(t- s) - 2h_1 \int_s^t (\psi_1, dW).
        \end{multline*}
       Rearranging and taking the expectation we have 
        \begin{equation*}
            \E\,\vertiii{\q(t)}_{-1}^2  \leq  \E\,\vertiii{\q(s)}_{-1}^2 +  \E \int_s^t (- \nu |\Delta \bfpsi|^2  + K ) \; ds 
        \end{equation*}
        where $K = \frac{ h_1|f|_{-2}^2}{\nu} + T_Q$. By Equation \eqref{eq:ch1*normpoincare} and Poincar\'{e} inequality we know that 
        \(
            \vertiii{ \q}_{-1}^2 \leq |\Delta \bfpsi|^2,
        \)
        so that
        \begin{equation*}
             \E\,\vertiii{\q(t)}_{-1}^2  \leq  \E\,\vertiii{\q(s)}_{-1}^2 +  \E \int_s^t \left(- \tfrac{\nu \lambda_1}{a_0} \vertiii{ \q(\tau)}_{-1}^2  + K \right) \; d\tau .
        \end{equation*}
        Therefore $V(\q) = \vertiii{ \q }_{-1}^2$ satisfies the estimate \eqref{eq:ch4Lyapunovfnct2} with 
        \begin{equation*}
             \gamma_1 := \frac{\nu \lambda_1}{a_0} \quand K = \frac{ h_1\|f\|_{-2}^2}{\nu} + T_Q.
        \end{equation*}
        
    \end{proof}
%
    \begin{remark}
        The existence of an invariant measure can also be proved without conditions on the parameter $r$ or any other parameter of the model. In fact it can be shown by means of the classic Krylov--Bogoliubov theorem, similarly to what was done for the 2D Navier--Stokes equations in \cite{Flandoli94}. For a complete proof of the existence of the invariant measure of the stochastic two--layer quasi--geostrophic model with this technique refer to \cite{thesis}. 
    \end{remark}
    
     \begin{remark}[Finite dimensional noise]
    From the literature (e.g.~\cite{butkovsky2020,glatt2017unique}) it is known that the coupling method applies also when the noise acts only on finitely many modes, as long as enough of them are activated. That lower bound on the dimension of the noise arose also in the argument just presented, when we required condition (ii) in \autoref{thm:exp_stab_QG}. Therefore, with few modifications to the proof of \autoref{thm:exp_stab_QG}, the ergodicity holds also for the model perturbed on the top layer only by a $n$ dimensional noise as long as \eqref{eq:condition_n} holds.
     \end{remark}

  \begin{remark}
  It is interesting to notice that the generalised coupling method used provides also a description of a potential way by which the system stabilizes.
Indeed the feedback control we introduced, namely $\Delta(\psi_1 -\tilde{\psi}_1)$, contains information only from the first layer. Then the condition on the bottom friction corresponds to a scenario in which the first layer stabilizes by the influence of the stochastic forcing and the second layer stabilizes mainly thanks to its friction. 
\end{remark}

\begin{remark}
\label{rmk:viscosity}
The result in \autoref{thm:exp_stab_QG} holds also under conditions not necessarily involving the parameter $r$ as in \autoref{thm:exp_stab_QG}. 
In fact with a simple modification in the proof of  we can retrieve a condition also, or solely, involving the viscosity. From \eqref{eq:ch3QGenergyestimate}, namely
     \begin{equation*}
        \dv{}{t}\vertiii{\bfxi}_{-1}^2 +\left(\nu - 2r \lambda_n^{-1} \right)|\Delta \bfphi|^2 \leq  \| \bfphi \|^2 \left( 2k_B |\Delta \bfpsi|^2 - 2r \right),
    \end{equation*}
    where $n$ is such that $\nu - 2a \lambda_n^{-1} > 0$, we can also use Poincar\'{e} inequality and not drop the viscosity to get
    \begin{equation*}
        \dv{}{t}\vertiii{\bfxi}_{-1}^2 + \lambda_1\left(\nu - 2r \lambda_n^{-1} \right)\| \bfphi \|^2\leq  \| \bfphi \|^2 \left( 2k_B |\Delta \bfpsi|^2 - 2r \right).
    \end{equation*}
    Using \eqref{eq:ch1*normpoincare} i.e.~$\| \bfphi\|^2 \leq \vertiii{\bfxi}_{-1}^2 \leq a_0 \|\bfphi \|^2$ 
    we derive 
    \begin{equation*}
        \dv{}{t}\vertiii{\bfxi}_{-1}^2 \leq \vertiii{\bfxi}_{-1}^2 \left( 2k_B |\Delta \bfpsi|^2 - 2r -\tfrac{\lambda_1}{a_0}\left(\nu - 2r \lambda_n^{-1} \right) \right),
    \end{equation*}
    so that, thanks to Gronwall lemma, 
    \begin{equation*}
       \vertiii{\bfxi(t)}_{-1}^2 \leq  \vertiii{\bfxi(0)}_{-1}^2\exp( - t \left( 2r+ \tfrac{\lambda_1}{a_0}(\nu - 2r \lambda_n^{-1} )\right) + 2k_B \int_0^t|\Delta \bfpsi|^2).
    \end{equation*}
    Then, Assumption~\ref{ref:A1} holds with $\kappa_0 = 2r+ \tfrac{\lambda_1}{a_0}(\nu - 2r \lambda_n^{-1} ) $.
    Since Assumption~\ref{ref:A2} holds when $\kappa_0 > \kappa_1 \kappa_3/ \kappa_2$, we require
    \begin{equation*}
        2r + \tfrac{\lambda_1}{a_0^2}\left(\nu - 2r \lambda_n^{-1} \right) >  \tfrac{2k_B}{\nu- \tfrac{2\gamma \Tr Q}{\lambda_1^2}}\left( T_Q + \tfrac{h_1}{\nu}\|f\|_{-2}^2 \right).
    \end{equation*}
    where $0<\gamma<  \lambda_1^2 \nu/ 2\Tr Q$. For example then picking $\gamma =  \lambda_1^2 \nu/ 4\Tr Q$ we have
    \begin{equation}
    \label{eq:ch3condition_r_nu}
         2r + \tfrac{\lambda_1}{a_0}\left(\nu - 2r \lambda_n^{-1} \right) >  \tfrac{4k_B}{\nu}\left( T_Q + \tfrac{h_1}{\nu}\|f\|_{-2}^2 \right).
    \end{equation}
    In particular this result provides exponential ergodicity of the model also when $r=0$ as long as the viscosity is large enough. The presence of a large viscosity would also imply that we could consider smaller values of $n$, namely more degenerate noise on the first layer. A similar result holds also for the stochastic Navier-Stokes equation. In fact in \cite{Mattingly1999} ergodicity is ensured in a large viscosity scenario even with a finite dimensional stochastic forcing. 
    \end{remark}
    Related to Remark~\ref{rmk:viscosity}, it is clear, on the one hand, that from a physical point of view we can expect ergodicity in case there is strong dissipation on both layers, for example by means of a large viscosity. On the other hand, the imposed parameter condition \eqref{eq:condition_r} requires sufficient dissipation only on one of the two layers by requiring the bottom layer (the one without noise) to be enslaved by the top one, or to converge autonomously, by means of a minimum requirement for the friction. However, a natural question which arises in this context is whether the spectral gap can be shown even when no particular condition on the dissipation is imposed. This is not clear directly from our analysis nor the available literature, nor does there exists a clear physical intuition. This will be subject of future research.


\end{document}